\newtheorem{thm}{Theorem}
\newtheorem{rem}{Remark}
\newtheorem{lem}{Lemma}
\newtheorem{prop}{Proposition}
\newcommand{\ZZ}{\mathbb{Z}}
\newcommand{\NN}{\mathbb{N}}
\newcommand{\QQ}{\mathbb{Q}}
\newcommand{\CC}{\mathbb{C}}
\newtheoremstyle{case}{}{}{}{}{}{:}{ }{}
\theoremstyle{case}
\newcommand{\HH}{\mathbb{H}}
\newcommand{\FF}{\mathbb{F}}
\newcommand{\mc}[1]{\mathcal{#1}}
\newcommand{\SL}{\text{SL}}
\newcommand{\ord}{\text{ord}}
\title{Ramanujan congruences for overpartitions with restricted odd differences}
\author{Michael Hanson \and Jeremiah Smith}
\begin{document}

\maketitle

\begin{abstract}
We investigate Ramanujan congruences for the function $\overline{t}(n)$, which counts the overpartitions of $n$ with restricted odd differences. In particular, we show that only one such congruence exists. Our method involves using the theory of modular forms to prove a more general theorem which bounds the number of primes possible for Ramanujan congruences in certain eta-quotients. This generalizes work done by Jonah Sinick. We also provide two congruences modulo 5 for $\overline{t}(n)$. 
\end{abstract}

\section{Introduction}

Perhaps one of the most famous results of Ramanujan was his collection of congruences for the partition function $p(n)$. For a positive integer $n$, $p(n)$ denotes the number of partitions of $n$, i.e. the number of ways to write $n$ as a non-increasing sum of positive integers. For example, there are five partitions of the integer 4, those being $4, 3 + 1$, $2 + 2$, $2 + 1 + 1$, $1 + 1 + 1 + 1$, and so $p(4) = 5$. Here we define $p(0) := 1$ and $p(n) = 0$ when $n < 0$. It is common notation to write a partition $n = \lambda_{1} + \lambda_{2} + \cdots + \lambda_{r}$ of $n$ in the concatenated form $\lambda_{1}\lambda_{2}\ldots\lambda_{r}$. Ramanujan's famous result on $p(n)$ is the collection of congruences
        \begin{align}\label{Ramanujan}
        \begin{cases} p(5n+4) \equiv 0 \pmod{5}, \\
        p(7n+5) \equiv 0 \pmod{7}, \\
        p(11n+6) \equiv 0 \pmod{11}.
        \end{cases}
        \end{align}
These are proven starting with the fact that the generating function for $p(n)$ takes the form $\sum_{n \geq 0}p(n)q^{n} = \prod_{n\geq 1} (1-q^{n})^{-1}$ with $q := e^{2\pi i z}$, which is essentially the inverted Dedekind's eta-function $\eta(z) := q^{1/24}\prod_{n\geq 1}(1 - q^{n})$. Ramanujan's congruences for $p(n)$ all take the form $p(\ell n + a) \equiv 0 \pmod{\ell}$ for some prime $\ell$, hence all such congruences are called \textit{Ramanujan congruences}. It is natural to ask whether there are other primes $\ell$ for which $p(n)$ has Ramanujan congruences. But as it turns out, work of Ahlgren and Boylan \cite{Ahlgren-Boylan} show that the Ramanujan congruences \eqref{Ramanujan} are the only ones for $p(n)$.

Ramanujan's work has inspired many others to consider such congruences for other modified partition functions. In this work, we consider the modified partition function $\overline{t}(n)$ that counts  ``overpartitions with restricted odd parts," as originally described in \cite{OGt}. An \textit{overpartition} of $n$ is a partition of $n$ in which the final occurrence of a number may be overlined. We let $\overline{p}(n)$ be the number of overpartitions of $n$. Thus, for example, $\overline{p}(4) = 14$, with the 14 partitions given as
    \begin{gather*}
    4, \quad \overline{4}, \quad 3 + 1, \quad \overline{3} + 1, \quad 3 + \overline{1}, \quad \overline{3} + \overline{1}, \quad 2 + 2, \quad 2 + \overline{2}, \quad 2 + 1 + 1, \\
    \overline{2} + 1 + 1, \quad 2 + 1 + \overline{1}, \quad \overline{2} + 1 + \overline{1}, \quad 1 + 1 + 1 + 1, \quad 1 + 1 + 1 + \overline{1}.
    \end{gather*}
Our function of interest $\overline{t}(n)$ counts the number of overpartitions of $n$ with the following restrictions.
\begin{enumerate}[(i)]
\item The difference between two successive parts may be odd only if the larger part is overlined.
\item If the smallest part is odd, then it is overlined. 
\end{enumerate}
For example, using $n = 4$ again we have $\overline{t}(4) = 8$, with the 8 such partitions given as
    \begin{gather*}
    4, \quad \overline{4}, \quad 3 + \overline{1}, \quad \overline{3} + \overline{1}, \quad 2 + 2, \quad 2 + \overline{2}, \quad \overline{2} + 1 + \overline{1}, \quad 1 + 1 + 1 + \overline{1}.
    \end{gather*}

The authors of \cite{OGt} show that the generating function for $\overline{t}(n)$ is given by 
        $$\sum_{n\geq 0} \overline{t}(n)q^{n} = \frac{\eta(3z)}{\eta(2z)\eta(z)}.$$
This is a weakly holomorphic modular form of weight $-1/2$ for the congruence subgroup $\Gamma_{1}(144)$. Many congruences have been proven for $\overline{t}(n)$. In particular, congruences modulo 2, 3, and 5 have been found:
    \begin{itemize}
    \item Theorem 1.1 of \cite{H-S}: For $n \geq 0$ we have $$\overline{t}(n) \equiv \begin{cases} (-1)^{k+1} \pmod{3} & n = k^{2} \text{ some $k$,} \\ 0 \pmod{3} & \text{else.} \end{cases}$$ 
    \item Theorem 1.2 of \cite{H-S}: For $n \geq 1$ we have $$\overline{t}(2n) \equiv \begin{cases} 1 \pmod{2} & n = (3k+1)^{2} \text{ some $k$,} \\ 0 \pmod{2} & \text{else.} \end{cases}$$
    \item Theorem 1.1 of \cite{LinLiu}: For all $\alpha, n \geq 0$ we have $$\overline{t}(9^{\alpha}(45n+30)) \equiv 0 \pmod{5}.$$
    \end{itemize}
More congruences can be found in \cite{Chern-Hao, H-S, LinLiu, Naika}. We provide two more congruences modulo 5, proven in Section \ref{mod5} using the theory of modular forms. 

\begin{thm}\label{thm mod 5}
The following congruences hold for all $n \geq 0$:
\begin{align}
    \bar{t}(80n + 40) &\equiv 0 \pmod{5}, \label{congruence 1}\\ 
    \bar{t}(80n + 60) &\equiv 0 \pmod{5} \label{congruence 2}.
\end{align}
\end{thm}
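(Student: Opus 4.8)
The plan is to work entirely modulo $5$ and to realize the coefficients $\overline{t}(80n+40)$ and $\overline{t}(80n+60)$ as the Fourier coefficients of a single holomorphic modular form, so that Sturm's bound reduces each congruence to the verification of finitely many coefficients. The engine of the argument is the elementary congruence $\prod_{n\ge 1}(1-q^n)^5\equiv\prod_{n\ge1}(1-q^{5n})\pmod5$, i.e.\ $\eta(z)^5\equiv\eta(5z)\pmod5$. First I would use it to clear the two denominator factors of the generating function: writing $\tfrac1{\eta(z)}\equiv\tfrac{\eta(z)^4}{\eta(5z)}$ and $\tfrac1{\eta(2z)}\equiv\tfrac{\eta(2z)^4}{\eta(10z)}\pmod5$, we obtain
\[
\sum_{n\ge0}\overline{t}(n)q^n=\frac{\eta(3z)}{\eta(2z)\eta(z)}\equiv\frac{\eta(z)^4\,\eta(2z)^4\,\eta(3z)}{\eta(5z)\,\eta(10z)}\pmod5,
\]
where the fractional $q$-powers in numerator and denominator cancel, so the manipulation is valid on honest $q$-series.

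The key structural feature is that the new denominator $\eta(5z)\eta(10z)=H_1(5z)$, with $H_1(z):=\eta(z)\eta(2z)$, is a power series in $q^5$. Writing the right-hand side as $G\cdot(1/H_1)(5z)$ with $G:=\eta(z)^4\eta(2z)^4\eta(3z)$ a holomorphic eta-product of weight $9/2$, the elementary identity $U_5\bigl(G\cdot\phi(5z)\bigr)=\bigl(U_5G\bigr)\cdot\phi$ lets me apply $U_5$ and pull the $q^5$-denominator straight out, giving $U_5F\equiv(U_5G)/H_1\pmod5$. Since $80n+40=20(4n+2)$ and $80n+60=20(4n+3)$, applying $U_{20}=U_4\circ U_5$ collapses both target progressions onto the single series $\Phi:=\sum_{n\ge0}\overline{t}(20n)q^n$, and \eqref{congruence 1}--\eqref{congruence 2} become precisely the assertion that the coefficients of $\Phi$ indexed by $n\equiv2,3\pmod4$ vanish modulo $5$.

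Next I would track $\Phi$ through these operators as a modular form. The operators $U_5$, $U_4$, and the additive-character projection onto the classes $2,3\pmod4$ all preserve modularity, so the projected series is, modulo $5$, congruent to a weakly holomorphic form of explicit (a priori half-integral) weight on some $\Gamma_0(N)$ with $144\mid N$. To legitimately invoke Sturm's bound I must upgrade this to a \emph{holomorphic} form of integral weight (or else invoke the half-integral weight analogue of the bound). The raw eta-quotient for $\overline{t}$ has a pole at the cusp $0$, so I would multiply by a suitable eta-factor vanishing to high enough order at every cusp, chosen so that its reduction modulo $5$ is again governed by $\eta(z)^5\equiv\eta(5z)$ and so that the total weight becomes an integer; a cusp-by-cusp order computation via the standard eta-quotient valence formula confirms holomorphy.

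After the projection, the two congruences are equivalent to the single statement that a specific holomorphic integral-weight form $\Psi$ on $\Gamma_0(N)$, of weight $k$, satisfies $\Psi\equiv0\pmod5$. By Sturm's bound this follows once the coefficients of $\Psi$ up to $\frac{k}{12}[\SL_2(\ZZ):\Gamma_0(N)]$ are checked to vanish modulo $5$, a finite computation. The main obstacle is precisely this bookkeeping around holomorphy and weight: because the generating function is only weakly holomorphic of half-integral weight, the genuine work lies in choosing the auxiliary eta-factor so that the resulting $\Psi$ is holomorphic at all cusps, has integral weight, and sits on a level $N$ small enough to keep the Sturm computation feasible.
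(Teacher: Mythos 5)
Your high-level skeleton (work modulo $5$ via $\eta(z)^5\equiv\eta(5z)$, repair holomorphy at the cusps, then Sturm's bound plus a finite coefficient check) is the same as the paper's, and your device for isolating the progressions ($U_{20}$ followed by projection onto $n\equiv 2,3\pmod 4$, instead of the paper's multiplication by $\eta^{12}(80z)$, resp.\ $\eta^{6}(80z)$, followed by $U_{80}$) would be a legitimate variant. However, the step you defer as ``bookkeeping'' --- choosing the auxiliary eta-factor --- is in fact the crux, and as you have prescribed it, it fails.

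Here is the problem. Your mod-$5$ representative $\frac{\eta(z)^4\eta(2z)^4\eta(3z)}{\eta(5z)\eta(10z)}$ has genuine poles at the cusps $a/c$ with $5\mid c$: by the standard eta-quotient order formula, the order at such a cusp is proportional to $4+\tfrac{4}{2}+\tfrac13-\tfrac{25}{5}-\tfrac{\gcd(c,10)^2}{10}$, which for $c=5$ equals $-7/6<0$. You require the correction factor to be ``governed by $\eta(z)^5\equiv\eta(5z)$,'' i.e.\ $\equiv 1\pmod 5$; but every such candidate of the shape $\eta(\delta z)^5/\eta(5\delta z)$ with $5\nmid\delta$ (in particular $\eta(z)^5/\eta(5z)$) has vanishing order exactly $0$ at the cusps with $5\mid c$, so it cannot repair them, while taking $5\mid\delta$ (e.g.\ $\eta(5z)^5/\eta(25z)$) merely pushes the same pole out to the cusps with $25\mid c$ of the enlarged level --- an infinite regress. (Note also that no form $\equiv 1\pmod 5$ can vanish ``to high order at every cusp,'' since its constant term forces nonvanishing at $\infty$.) This is exactly the difficulty the paper's construction is designed to avoid: multiplying the generating function by $\eta^{12}(80z)$ (resp.\ $\eta^{6}(80z)$) does double duty, since it is supported on $q^{40+80\ZZ}$ (resp.\ $q^{20+80\ZZ}$), which isolates the progression, \emph{and} it vanishes at every cusp --- in particular with order proportional to $12\cdot 25/80$ at the $5\mid c$ cusps --- so that the single factor $\eta(z)^5/\eta(5z)\equiv 1\pmod 5$ then suffices to produce honest cusp forms in $S_{15/2}(\Gamma_1(1440))$ and $S_{9/2}(\Gamma_1(2880))$. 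You could instead rescue your route by dropping the ``$\equiv 1\pmod 5$'' requirement altogether: clear denominators rather than divide, i.e.\ use $\eta(z)^4\eta(2z)^4\eta(3z)\equiv \eta(5z)\eta(10z)\sum_{n\geq 0}\overline{t}(n)q^n \pmod 5$ together with the fact that $\FF_5\llbracket q\rrbracket$ is an integral domain (multiplying by a form $\not\equiv 0\pmod 5$ does not affect whether a series vanishes mod $5$) --- but that is a different argument from the one you wrote. Finally, the theorem ultimately rests on the finite computation itself (the paper checks coefficients up to the Sturm bounds $2161$ and $2593$ in SAGE), which your proposal sets up but does not carry out.
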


However, one might question if any Ramanujan congruences hold for $\overline{t}(n)$ (i.e. congruences of the form $\overline{t}(\ell n + a) \equiv 0 \pmod{\ell}$ for a prime $\ell$). The results of \cite{H-S} above show that the only Ramanujan congruence mod 2 or 3 is $\overline{t}(3n+2) \equiv 0 \pmod{3}$, and one can check that there are no Ramanujan congruences mod 5. In fact, we have the following.

\begin{thm}\label{2nd place}
The only Ramanujan congruence for $\overline{t}(n)$ is \begin{equation*}
    \overline{t}(3n+2) \equiv 0 \pmod{3}.
\end{equation*}
\end{thm}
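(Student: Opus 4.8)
The plan is to derive Theorem~\ref{2nd place} as an application of the general eta-quotient bound that is the technical heart of this paper, specialized to
\[
F(z) = \frac{\eta(3z)}{\eta(2z)\eta(z)} = \sum_{n\geq 0}\overline{t}(n)q^{n},
\]
a weight $-1/2$ weakly holomorphic modular form on $\Gamma_{1}(144)$. That theorem provides an explicit constant $B$, depending only on the weight and level data of $F$, such that no Ramanujan congruence $\overline{t}(\ell n + a)\equiv 0 \pmod{\ell}$ can hold for a prime $\ell > B$. It then suffices to examine the finitely many primes $\ell \leq B$ individually, confirming the single congruence at $\ell = 3$ and excluding all others.

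For the large-prime regime, the mechanism is the following. Fix a prime $\ell \geq 5$ with $\ell \nmid 144$ and suppose $\overline{t}(\ell n + a)\equiv 0 \pmod{\ell}$ for all $n$. Writing $\Theta = q\,\frac{d}{dq}$ for Ramanujan's theta operator and using Fermat's little theorem to express the indicator of the residue class $a$ as $1 - (n-a)^{\ell-1}$ over $\FF_{\ell}$, the hypothesis becomes the vanishing modulo $\ell$ of
\[
\sum_{n}\overline{t}(n)\bigl(1-(n-a)^{\ell-1}\bigr)q^{n} \equiv F - \sum_{j=0}^{\ell-1}\binom{\ell-1}{j}(-a)^{\ell-1-j}\,\Theta^{j}F \pmod{\ell}.
\]
When $a\not\equiv 0$ the $j=0$ term cancels $F$, leaving a relation that expresses $\Theta^{\ell-1}F$ as an $\FF_{\ell}$-combination of the lower iterates $\Theta^{j}F$ with $1\leq j\leq \ell-2$; the case $a\equiv 0$ instead yields $F\equiv \Theta^{\ell-1}F$. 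After clearing the denominator of $F$ and passing to positive integer weight (multiplying by a suitable eta power, via $\eta(z)^{\ell}\equiv\eta(\ell z)\pmod{\ell}$, and by powers of $E_{\ell-1}\equiv 1$), one analyzes these relations through the filtration $w(\cdot)$ of modular forms mod $\ell$. Since $\Theta$ raises $w$ by at most $\ell+1$, while the ambient holomorphic form has weight bounded independently of $\ell$, the forced filtration of $\Theta^{\ell-1}F$ cannot be matched by the lower-order terms once $\ell$ is large, producing the contradiction and the bound $B$. I expect the main obstacle to be making $B$ effective and small despite the negative, half-integral weight of $F$: the filtration machinery is cleanest in integer weight, so some care (passing to $F^{2}$ or twisting by a unary theta series, and tracking the level carefully) is needed before the mod-$\ell$ theory applies.

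It remains to dispose of the finitely many primes $\ell \leq B$, in particular $\ell = 2,3,5$. For $\ell = 3$, Theorem~1.1 of \cite{H-S} gives $\overline{t}(n)\equiv(-1)^{k+1}\pmod 3$ when $n=k^{2}$ and $\overline{t}(n)\equiv 0$ otherwise (for $n\geq 1$); since every perfect square is congruent to $0$ or $1$ modulo $3$, no integer $\equiv 2\pmod 3$ is a square, so $\overline{t}(3n+2)\equiv 0\pmod 3$, whereas the classes $0$ and $1$ each contain squares $(3k)^{2}$ and $(3k+1)^{2}$ on which $\overline{t}\equiv \pm 1\not\equiv 0$, showing this is the unique congruence modulo $3$. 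For $\ell = 2$, Theorem~1.2 of \cite{H-S} together with a short check of the odd-index values rules out a congruence, and $\ell = 5$ is eliminated by direct computation. Each remaining $\ell \leq B$ is handled the same way: a Sturm-type bound reduces the claim that $\overline{t}(\ell n + a)\not\equiv 0$ for some $n$ to inspecting finitely many Fourier coefficients of $F$ modulo $\ell$. Assembling the large-prime bound with these finite verifications leaves $\overline{t}(3n+2)\equiv 0\pmod 3$ as the only Ramanujan congruence, proving Theorem~\ref{2nd place}.
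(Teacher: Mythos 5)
Your top-level route is the same as the paper's: invoke the general eta-quotient theorem (Theorem \ref{mainthm}) to rule out all large primes, then dispose of $\ell = 2,3,5$ via \cite{H-S} and direct computation. Your small-prime paragraph is essentially correct (and for ruling out a congruence you do not even need a Sturm-type bound -- a single nonvanishing coefficient in each residue class suffices, e.g.\ $n=0$ works for every class mod $5$). The problem is in the large-prime part, where there are two genuine gaps. First, you never verify the hypotheses of Theorem \ref{mainthm} for $f = \eta(3z)/(\eta(2z)\eta(z))$, and this verification is the entire content of the deduction. Here $r=1$, $s=2$, so $s-r = 1$ is odd and the branch $s-r \in 2\NN_0$ is unavailable; the theorem applies only because $u = v = 3$ (so $u \equiv v \pmod{\ell}$ for every $\ell$) and $r - s = -1 \neq 1,3$, with $N = 6$ and $\gamma = 1$, so that the bound is $\max(5, |s-r|+4) = 5$ and $(\ell, \gamma N)=1$ is automatic for $\ell > 5$. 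In particular, your worry about ``making $B$ effective and small'' is moot -- the theorem hands you $B = 5$ exactly -- but without checking the parity and congruence hypotheses you have no right to invoke it at all, and for a generic eta-quotient with $s-r$ odd and $u \neq v$ it would genuinely fail to apply.

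Second, the mechanism you sketch for the large-prime regime misdescribes the actual argument and would fail if pursued as stated. You claim the contradiction arises because ``the ambient holomorphic form has weight bounded independently of $\ell$'' while $\Theta^{\ell-1}$ forces large filtration. No such bounded-weight holomorphic completion exists: any integer-weight holomorphic form congruent mod $\ell$ to a suitable shift of $f$ must, like the paper's $F_\ell = \Delta(z)^{\ell^t}\left(\Delta(\mu_1 z)\cdots\Delta(\mu_s z)/\Delta(\lambda_1 z)\cdots\Delta(\lambda_r z)\right)^{(\ell^2-1)/24}$, have weight growing with $\ell$ (here $\frac{(\ell^2-1)(s-r)}{2} + 12\ell^t$), so the dimension-count style contradiction you envision never materializes. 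The actual mechanism is the Kiming--Olsson/Sinick trichotomy (Proposition \ref{3options}): a Ramanujan congruence forces $b=0$ or $w_\ell(F_\ell) \equiv (\ell+1)/2$ or $(\ell+3)/2 \pmod{\ell}$; the latter two are excluded by computing $w_\ell(F_\ell) \pmod \ell$ \emph{exactly} -- which requires Proposition \ref{filtration}, i.e.\ zero-freeness of the eta-quotient on $\HH$ plus the valence formula, together with $r-s \neq 1,3$ and $\ell > |s-r|+4$ -- and the case $b=0$ is excluded separately (here easily: by Lemma \ref{cong-reduce} with $u \equiv v$, $b=0$ corresponds to $a=0$, and $\overline{t}(0)=1$). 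Your Fermat-indicator identity reformulating the congruence in terms of $\Theta^j F$ is fine, but nothing in your sketch supplies the filtration computations that actually produce the contradiction.
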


In order to prove Theorem \ref{2nd place}, it suffices to show that there are no Ramanujan congruences for primes $\ell>5.$ This is an immediate consequence of Theorem \ref{mainthm} below, which is a generalization of work done by Sinick in \cite{Sinick}. 

\begin{thm}\label{mainthm}
Let $\lambda = \lambda_{1}\lambda_{2} \ldots \lambda_{r}$ and $\mu = \mu_{1}\mu_{2} \ldots \mu_{s}$ be partitions of $u \in \NN$ and $v \in \NN$, respectively, where we assume without loss of generality that $\lambda_i\neq \mu_j$ for all $i,j.$ Define
        $$f(z) := \prod_{n \geq 1} \frac{(1 - q^{\lambda_{1}n})(1 - q^{\lambda_{2}n})\cdots (1 - q^{\lambda_{r}n})}{(1 - q^{\mu_{1}n})(1 - q^{\mu_{2}n})\cdots (1 - q^{\mu_{s}n})} =: \sum_{n\geq 0} c(n)q^{n}.$$ 
Let $N := \text{lcm}(\lambda_{1}, \ldots, \lambda_{r}, \mu_{1}, \ldots, \mu_{s})$, and let $\gamma$ be the number of occurrences of the smallest element of $\{\lambda_{1}, \ldots, \lambda_{r}, \mu_{1}, \ldots, \mu_{s}\}.$  Let $\ell$ be prime such that $\ell > \max(5, |s-r|+4)$ and $(\ell, \gamma N)=1$. If either $s-r \in 2\NN_{0}$ or $u \equiv v \pmod{\ell}$ and $r-s \neq 1,3$, then $c(n)$ does not obey a Ramanujan congruence modulo $\ell$.
\end{thm}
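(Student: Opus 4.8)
The plan is to argue by contradiction: suppose $c(\ell n + a) \equiv 0 \pmod{\ell}$ for all $n \ge 0$ and some fixed residue $a$, and extract a contradiction from the theory of modular forms modulo $\ell$. First I would recast $f$ as an eta-quotient. Since $\prod_{n\ge 1}(1-q^{\lambda n}) = q^{-\lambda/24}\eta(\lambda z)$ and $\lambda_1 + \cdots + \lambda_r = u$, $\mu_1 + \cdots + \mu_s = v$, one has $f(z) = q^{(v-u)/24} g(z)$, where
\[
g(z) := \frac{\eta(\lambda_1 z)\cdots\eta(\lambda_r z)}{\eta(\mu_1 z)\cdots\eta(\mu_s z)}
\]
is a weakly holomorphic modular form of weight $k = (r-s)/2$ and level dividing $N$ (carrying a genuine multiplier system when $k \notin \ZZ$). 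The hypothesis $(\ell, N)=1$ makes $\ell$ a good prime, while $\ell > |s-r|+4 = 2|k|+4$ places us in the low-weight regime in which the mod-$\ell$ theory (the theta operator $\theta = q\tfrac{d}{dq}$, the filtration $w$, and the relation $E_{\ell-1}\equiv 1$) is well behaved.

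Next I would translate the assumed congruence into the vanishing of a single residue component. Writing $f_a := \sum_{n\equiv a\,(\ell)} c(n)\,q^n$, the Ramanujan congruence is equivalent to $f_a \equiv 0 \pmod{\ell}$, and $f_a$ is recovered from $f$ by an indicator polynomial in $\theta$: by Wilson's theorem, $f_a \equiv -\prod_{b\ne a}(\theta - b)\,f \pmod{\ell}$, a degree-$(\ell-1)$ polynomial applied to the reduction $\overline f$. To bring filtration theory to bear I first clear the poles of $g$ at the cusps by multiplying by a power of $\Delta = \eta^{24}$ chosen coprime to $\ell$, and in the half-integer case by an auxiliary power of $\eta$ (or a weight-$1/2$ theta series) to reach integer weight; this produces a holomorphic form congruent mod $\ell$ to $\overline f$ up to an explicit $\ell$-unit factor, of weight still controlled by $|s-r|$. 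Here the hypotheses bifurcate: the case $s-r \in 2\NN_{0}$ supplies integer weight $\le 0$ directly, whereas the case $u\equiv v \pmod{\ell}$ with $r-s\ne 1,3$ is precisely what permits passage to integer weight while avoiding the exceptional weights $1/2$ and $3/2$, which are exactly the weights at which genuine congruences (as for $p(n)$) live.

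I would then reach the contradiction using the two explicit low-order coefficients. Expanding the product, the smallest terms of $f$ are $c(0)=1$ and $c(m_0)=\pm\gamma$, where $m_0$ is the smallest element of $\{\lambda_i,\mu_j\}$ (the sign depending on whether $m_0$ lies among the $\lambda_i$ or the $\mu_j$, and exactly one of these occurs since $\lambda_i\ne\mu_j$); the hypothesis $(\ell,\gamma)=1$ makes both coefficients units modulo $\ell$. Mirroring the partition case, where the congruence must sit at $24a\equiv 1\pmod{\ell}$, the filtration bound forces any admissible residue to be congruent to the natural exponent $(v-u)/24$ of $g$. When $u\equiv v\pmod{\ell}$ this pins $a\equiv 0$, and then $c(0)=1\not\equiv 0$ contradicts $f_0\equiv 0$ at once. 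In the remaining integer-weight case the argument is completed by showing that $f_a\equiv 0$ would collapse the filtration of the cleared form below what the presence of the unit $c(m_0)=\pm\gamma$ off the progression allows, again a contradiction; the bound $\ell > 2|k|+4$ is what keeps the ambient space of mod-$\ell$ forms small enough for this to bite.

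The step I expect to be the main obstacle is the half-integer-weight bookkeeping in the second paragraph: tracking the filtration and weight of the mod-$\ell$ form after applying the $\theta$-polynomial and clearing the cusp poles, while correctly carrying the multiplier system of $g$. This is exactly where the hypotheses split, and verifying that the adjusted weight stays below $\ell$ and off $\{1/2,3/2\}$ — so that the relevant space is small enough to force the residue and contradict the units $c(0)=1$ and $c(m_0)=\pm\gamma$ — is the delicate heart of the argument.
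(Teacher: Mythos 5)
Your overall strategy (contradiction, eta-quotient recasting, passage to an integral-weight holomorphic form, mod-$\ell$ filtration theory) points in the same general direction as the paper, but the step you yourself flag as the ``delicate heart'' is exactly where the sketch would fail, and the fix is not bookkeeping but a different device. Multiplying $g$ by a power of $\Delta$ ``chosen coprime to $\ell$'' (or by auxiliary powers of $\eta$) does \emph{not} produce a form ``congruent mod $\ell$ to $\overline{f}$ up to an $\ell$-unit factor'': multiplication of $q$-series convolves coefficients, so the Ramanujan congruence for $f$ neither implies nor is implied by one for the product unless the multiplier is, modulo $\ell$, a power series in $q^{\ell}$ times a power of $q$. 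This is why the paper (following Kiming--Olsson and Sinick) takes
$F_{\ell}(z) := \Delta(z)^{\ell^{t}}\bigl(\Delta(\mu_{1}z)\cdots\Delta(\mu_{s}z)/\Delta(\lambda_{1}z)\cdots\Delta(\lambda_{r}z)\bigr)^{\delta_{\ell}}$ with $\delta_{\ell}=(\ell^{2}-1)/24$: the exponents $\ell^{t}$ and $\ell^{2}-1$ are chosen precisely so that, via $(1-q^{n})^{\ell^{2}}\equiv(1-q^{\ell^{2}n})\pmod{\ell}$, the discrepancy between $F_{\ell}$ and $f$ is a series supported on exponents divisible by $\ell$, and the correspondence of congruences (with the shift $24a\equiv 24b+(u-v)\pmod{\ell}$) is then proved by applying $U_{\ell}$ (the paper's Lemma \ref{cong-reduce}). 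A power of $\Delta$ coprime to $\ell$ has exactly the wrong divisibility for this to work, and in the half-integral case no auxiliary $\eta$-power patch restores it.

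The endgame also has a gap. Your contradiction mechanism — that the forced residue must equal $(v-u)/24$, after which $c(0)=1$ finishes the $u\equiv v$ case, and in the remaining case $f_{a}\equiv 0$ would ``collapse the filtration below what the unit $c(m_{0})=\pm\gamma$ allows'' — is not an argument in the case $u\not\equiv v\pmod{\ell}$, $s-r\in 2\NN_{0}$, which the paper calls the subtle one. What is actually needed there is: (i) the filtration identities $w_{\ell}(F_{\ell})=\tfrac{(\ell^{2}-1)(s-r)}{2}+12\ell^{t}$ and $w_{\ell}(\theta^{m}F_{\ell})\geq w_{\ell}(F_{\ell})$, proved via the valence formula and Sturm's theorem using that the eta-quotient never vanishes on $\HH$ (Proposition \ref{filtration}); (ii) $\theta F_{\ell}\not\equiv 0\pmod{\ell}$, which is where $\gamma$ actually enters (Proposition \ref{non-vanishing}) — not in any filtration lower bound; (iii) the Kiming--Olsson/Sinick trichotomy (Proposition \ref{3options}) to reduce to $b=0$, with $r-s\neq 1,3$ and $\ell>|s-r|+4$ ruling out the filtrations $(\ell+1)/2$ and $(\ell+3)/2$ modulo $\ell$ (your identification of $r-s\neq1,3$ with avoiding genuine half-integral weights $1/2,3/2$ is only a heuristic analogy); and (iv) Sinick's $\theta$-cycle case analysis (Proposition \ref{cases}) to exclude $b=0$ itself, by forcing $w_{\ell}(\theta^{k_{0}+1}F_{\ell})=w_{\ell}(F_{\ell})+s-r+3-\ell<w_{\ell}(F_{\ell})$, contradicting (i). None of these four ingredients is present or replaceable by the low-order-coefficient considerations you describe; in particular ``the ambient space of mod-$\ell$ forms is small'' cannot bite, since $F_{\ell}$ has enormous weight and only its weight modulo $\ell$ and $\ell-1$ is controlled.
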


This theorem identifies an explicit upper bound for the primes that could result in a Ramanujan congruence. 

In Section \ref{forms mod l}, we provide a brief introduction to modular forms modulo $\ell$, and we give a few results within the theory that will be used in later sections. Section \ref{keys to theorem} provides motivation and an outline for the proof of Theorem \ref{mainthm}. Section \ref{calcFiltrations} proves the most interesting and essential result for the proof of Theorem \ref{mainthm}. Section \ref{calculations} disposes of some necessary calculations needed for Section \ref{pf of main} which complete the proofs of Theorems \ref{mainthm} and \ref{2nd place}, respectively.
Section \ref{mod5} proves Theorem \ref{thm mod 5}. Section \ref{exampleofProp2} provides a helpful example pertaining to the discussion following Proposition \ref{filtration}. 

\section{Modular forms modulo $\ell$}\label{forms mod l}

In this section, we introduce the notion of ``modular forms mod $\ell$" as well as some preliminary results that will be used in later sections. We refer the reader to \cite{Web} for a more detailed account of this material. 

We denote the $\CC$-vector space of weakly holomorphic modular forms of integer weight $k$ on the congruence subgroup $\Gamma_{1}(N)$ of $\SL_{2}(\ZZ)$ by $M^{!}_{k}(\Gamma_{1}(N))$. Let $M_{k}(\Gamma_{1}(N))$ denote the subspace of those forms which are holomorphic at the cusps of $\Gamma_{1}(N)$, and let $S_{k}(\Gamma_{1}(N))$ denote its corresponding subspace of cusp forms. Given $M = \begin{psmallmatrix} a & b \\ c & d \end{psmallmatrix} \in \SL_{2}(\ZZ)$, define the usual weight-$k$ slash operator on holomorphic functions $f$ on the upper half plane $\HH$ as 
        $$f \mid_{k} M := (cz+d)^{-k}f\left(\frac{az+b}{cz+d}\right).$$
We will sometimes drop the subscript $k$ for notational convenience. Now define $\theta := \frac{1}{2\pi i} \frac{d}{dz} = q\frac{d}{dq}$, where $q := e^{2\pi i z}$, so that on Fourier series we have
        $$\theta \left(\sum_{n \geq 0} a(n)q^{n}\right) = \sum_{n\geq 0} na(n)q^{n}.$$
Also define the $m$-th $U$-operator on Fourier series as
        $$\sum_{n\geq 0} a(n)q^{n} \mid U_{m} := \sum_{n\geq 0} a(mn)q^{n}.$$

Eisenstein series are canonical examples of modular forms for $\SL_{2}(\ZZ)$, and they play an important role in Lemma \ref{R} below. For even $k > 2$, the weight-$k$ \textit{Eisenstein series} for $\SL_{2}(\ZZ)$ is
        $$E_{k}(z) := 1 - \frac{2k}{B_{k}}\sum_{n\geq 1} \sigma_{k-1}(n)q^{n},$$
where $B_{k}$ is the $k$-th Bernoulli number, and $\sigma_{k-1}(n) = \sum_{d \mid n} d^{k-1}$. When $k = 2$, $E_{2}(z) \coloneqq 1 - 24\sum_{n\geq 1}\sigma_1(n)q^n$ is not a modular form, but rather a ``quasi-modular form." It has the transformation law
        \begin{align}\label{E2}
        E_{2}(z) \mid M = E_{2}(z) - \frac{6ic}{\pi(cz+d)},
        \end{align}
for $M = \begin{psmallmatrix} a & b \\ c & d \end{psmallmatrix} \in \SL_{2}(\ZZ)$. We also have the \textit{Delta function}
        $$\Delta(z) = \eta(z)^{24} = q\prod_{n=1}^{\infty}(1-q^{n})^{24} \in M_{12}(\SL_{2}(\ZZ)),$$
which vanishes at the cusp $\infty$ of $\SL_{2}(\ZZ)$. 

Given a modular form $f \in M_{k}(\Gamma_{1}(N)) \cap \ZZ\llbracket q \rrbracket$, one can reduce the Fourier coefficients of $f$ modulo a prime $\ell$, giving an element $\tilde{f}$ of $\FF_{\ell}\llbracket q \rrbracket$. We call $\tilde{f}$ a \textit{modular form modulo $\ell$} for $\Gamma_{1}(N)$. The \textit{filtration} of $f$ is defined as
        $$w_{\ell}(f) := \min\{k' : \tilde{f} \in \widetilde{M}_{k'}(\Gamma_{1}(N))\},$$
where
        $$\widetilde{M}_{k'}(\Gamma_{1}(N)) := \{\tilde{g} : g \in M_{k'}(\Gamma_{1}(N))\}.$$
We will also refer to preimages of $\tilde{f}$ under the reduction map as ``modular forms modulo $\ell$". 

By using \eqref{E2}, one can easily generalize Lemma 3 of \cite{Swinnerton-Dyer} for $N > 1$ to conclude that if $f \in M_{k}(\Gamma_{1}(N))$ then $12\theta f - k E_{2} f \in M_{k+2}(\Gamma_{1}(N))$. Theorem 2(i) from \cite{Swinnerton-Dyer} implies that $E_{\ell - 1} \equiv 1 \pmod{\ell}$ and $E_{\ell + 1} \equiv E_{2} \pmod{\ell}$. These facts come together to prove Lemma \ref{R}.

\begin{lem}[Lemma 2.1 of \cite{Sinick}]\label{R}
If $f \in M_{k}(\Gamma_{1}(N)) \cap \ZZ\llbracket q \rrbracket$, then defining $R$ to be
        \begin{align}\label{R-eq}
        R := \left( \theta f - \frac{k}{12}E_{2}f\right)E_{\ell - 1} + \frac{k}{12}E_{\ell + 1}f,
        \end{align}
$R$ is a modular form of weight $k + \ell + 1$ such that $R \equiv \theta f \pmod{\ell}$. In particular, $\theta f$ is a modular form $\pmod{\ell}$ for $\Gamma_{1}(N)$. It follows that if $\tilde{f} \not\equiv 0 \pmod{\ell}$, then $w_{\ell}(\theta f) \leq w_{\ell}(f) + \ell + 1$. 
\end{lem}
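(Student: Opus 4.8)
The plan is to verify the two asserted properties of $R$ separately — that it is a holomorphic modular form of weight $k+\ell+1$, and that it reduces to $\theta f$ modulo $\ell$ — and then to deduce the filtration bound by applying the construction to a minimal-weight representative. For holomorphy I would split $R$ into its two summands. The generalized version of Lemma 3 of \cite{Swinnerton-Dyer} recalled just above the statement gives $12\theta f - kE_2 f \in M_{k+2}(\Gamma_1(N))$, and dividing by $12$ shows $\theta f - \frac{k}{12}E_2 f \in M_{k+2}(\Gamma_1(N))$. Since $\ell$ is an odd prime larger than $5$, both $\ell-1$ and $\ell+1$ are even and exceed $2$, so $E_{\ell-1}\in M_{\ell-1}(\SL_2(\ZZ))$ and $E_{\ell+1}\in M_{\ell+1}(\SL_2(\ZZ))$ are honest modular forms; as forms for the full modular group, and since $\Gamma_1(N)\subseteq\SL_2(\ZZ)$, they are in particular forms for $\Gamma_1(N)$. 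A product of holomorphic modular forms is holomorphic at the cusps with weight the sum of the weights, so the first summand $(\theta f - \frac{k}{12}E_2 f)E_{\ell-1}$ lies in $M_{(k+2)+(\ell-1)}(\Gamma_1(N))=M_{k+\ell+1}(\Gamma_1(N))$ and the second summand $\frac{k}{12}E_{\ell+1}f$ lies in $M_{(\ell+1)+k}(\Gamma_1(N))=M_{k+\ell+1}(\Gamma_1(N))$; hence $R\in M_{k+\ell+1}(\Gamma_1(N))$.

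Next I would establish the congruence. Because $\ell>5$ we have $(\ell,12)=1$, so $\frac{k}{12}$ is $\ell$-integral; combined with the $\ell$-integrality of $E_{\ell\pm1}$ furnished by Theorem 2(i) of \cite{Swinnerton-Dyer}, all coefficients of $R$ are $\ell$-integral and the reduction $\tilde R$ is defined. Substituting the congruences $E_{\ell-1}\equiv 1$ and $E_{\ell+1}\equiv E_2 \pmod\ell$ into \eqref{R-eq} gives
\begin{align*}
R \equiv \left(\theta f - \frac{k}{12}E_2 f\right) + \frac{k}{12}E_2 f = \theta f \pmod{\ell},
\end{align*}
where the two $\frac{k}{12}E_2 f$ terms cancel. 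Thus $\widetilde{\theta f}=\tilde R$ is the reduction of a holomorphic modular form, i.e. $\theta f$ is a modular form modulo $\ell$ for $\Gamma_1(N)$.

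Finally, for the filtration bound I would apply the construction not to $f$ but to a representative of minimal weight. Assuming $\tilde f\not\equiv 0$, choose $g\in M_{w_\ell(f)}(\Gamma_1(N))\cap\ZZ\llbracket q\rrbracket$ with $\tilde g=\tilde f$; since $\theta$ acts on Fourier coefficients by $a(n)\mapsto na(n)$, reduction commutes with $\theta$, so $\widetilde{\theta f}=\widetilde{\theta g}$. Running the argument above with $g$ (which has weight $w_\ell(f)$) produces $R_g\in M_{w_\ell(f)+\ell+1}(\Gamma_1(N))$ with $\widetilde{R_g}=\widetilde{\theta g}=\widetilde{\theta f}$, so $\widetilde{\theta f}\in\widetilde{M}_{w_\ell(f)+\ell+1}(\Gamma_1(N))$, and therefore $w_\ell(\theta f)\leq w_\ell(f)+\ell+1$.

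I expect the only genuinely delicate point to be the $\ell$-integrality invoked in the second step: the Eisenstein coefficients $-\frac{2k}{B_k}\sigma_{k-1}(n)$ can carry denominators governed by von Staudt--Clausen, and the congruence statement is meaningful only because Theorem 2(i) of \cite{Swinnerton-Dyer} guarantees that $E_{\ell-1}$ and $E_{\ell+1}$ are $\ell$-integral with the stated reductions. Everything else — matching the weights of the two summands, preservation of holomorphy at the cusps of $\Gamma_1(N)$ under multiplication, and the cancellation of the $E_2 f$ terms — is routine bookkeeping once these inputs are in hand.
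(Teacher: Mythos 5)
Your proposal is correct and follows essentially the same route the paper indicates: it combines the generalized Swinnerton--Dyer fact $12\theta f - kE_{2}f \in M_{k+2}(\Gamma_{1}(N))$ with the congruences $E_{\ell-1}\equiv 1$ and $E_{\ell+1}\equiv E_{2} \pmod{\ell}$, exactly the two ingredients the paper cites, and your passage to a minimal-weight representative for the filtration bound is the standard completion of that sketch. No gaps beyond the harmless slip of writing $\ell>5$ where $\ell\geq 5$ suffices (and is what is needed elsewhere in the paper).
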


We will also need the following facts about filtrations.

\begin{lem}\label{2.2}
Let $N \geq 4$, let $f, g \in M(\Gamma_{1}(N)) \cap \ZZ\llbracket q \rrbracket$, and let $\ell \geq 5$ be prime. Then:
\begin{enumerate}[(i)]

\item We have $w_{\ell}(\theta f) = w_{\ell}(f) + \ell + 1$ if and only if $w_{\ell}(f) \not\equiv 0 \pmod{\ell}$. 

\item If $f$ and $g$ have weights $k_{1}$ and $k_{2}$ respectively and $\tilde{f} \equiv \tilde{g} \not\equiv 0 \pmod{\ell}$, then $k_{1} \equiv k_{2} \pmod{\ell - 1}$. 

\item If $\ell \nmid N$ then for $i \geq 0$ we have $w_{\ell}(f^{i}) = i\cdot w_{\ell}(f)$.  
\end{enumerate}
\end{lem}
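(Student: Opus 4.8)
The plan is to deduce all three statements from the standard structure theory of modular forms modulo $\ell$ (Serre and Swinnerton--Dyer, as developed in \cite{Web}), carried over from $\SL_2(\ZZ)$ to $\Gamma_1(N)$. The one place the hypothesis $\ell\nmid N$ (together with $N\ge4$, so that $X_1(N)$ is a fine moduli space) enters is to guarantee that $X_1(N)$ has good reduction at $\ell$ and that the Hasse invariant --- the reduction $A:=\widetilde{E_{\ell-1}}$, a weight-$(\ell-1)$ form vanishing to order $1$ at each supersingular point --- is available. The engine is then the description of the filtration by vanishing order: for any weight-$k$ representative $F$ of a nonzero mod-$\ell$ form $\tilde f$, one has $w_\ell(f)=k-(\ell-1)\nu(F)$, where $\nu(F)$ is the minimal order of vanishing of $F$ along the supersingular points, and a minimal-weight representative $F_0$ is characterized by $\nu(F_0)=0$ (equivalently, $A\nmid F_0$). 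Granting this, part (ii) is immediate: if $\tilde f\equiv\tilde g\not\equiv0$ with representatives $F$ of weight $k_1$ and $G$ of weight $k_2$, then $k_1-(\ell-1)\nu(F)=w_\ell(f)=w_\ell(g)=k_2-(\ell-1)\nu(G)$, so $k_1\equiv k_2\pmod{\ell-1}$. I would prove (ii), (iii), (i) in that order.

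For (iii), the inequality $w_\ell(f^i)\le i\,w_\ell(f)$ needs no geometry: a weight-$w_\ell(f)$ representative $g$ of $\tilde f$ yields the weight-$i\,w_\ell(f)$ representative $g^i$ of $\tilde f^{\,i}$. For the reverse inequality I would take the minimal representative $F_0$ of $\tilde f$, so $\nu(F_0)=0$, meaning $\mathrm{ord}_P(F_0)=0$ for some supersingular point $P$. Since $\mathrm{ord}_P(F_0^{\,i})=i\,\mathrm{ord}_P(F_0)$ at every supersingular point, the minimum over $P$ of these orders is still attained at that same $P$ with value $i\cdot 0=0$; hence $\nu(F_0^{\,i})=0$ and $w_\ell(f^i)=i\,w_\ell(f)-(\ell-1)\cdot 0=i\,w_\ell(f)$. (This is exactly the step that would fail for a product of two distinct forms, whose minimal vanishing orders may be attained at different supersingular points.)

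For (i), Lemma \ref{R} gives $w_\ell(\theta f)\le w_\ell(f)+\ell+1=:k+\ell+1$, and since $\theta f$ is realizable in weight $k+\ell+1\equiv k+2\pmod{\ell-1}$, part (ii) forces any strict drop to land at weight at most $k+2$. To decide between the two cases I would fix a weight-$k$ representative $g$ with $\tilde g=\tilde f$ and use the generalized Ramanujan--Serre derivative, writing $12\,\theta g=(12\,\theta g-kE_2g)+kE_2g$ with $12\,\theta g-kE_2g\in M_{k+2}(\Gamma_1(N))$ and $kE_2g\equiv kE_{\ell+1}g\pmod\ell$ of weight $k+\ell+1$. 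If $\ell\mid k$ the Eisenstein term vanishes modulo $\ell$, so $\widetilde{\theta f}$ is the reduction of the weight-$(k+2)$ form $\tfrac1{12}(12\,\theta g-kE_2g)$ and the filtration strictly drops. If $\ell\nmid k$, a drop to weight $k+2$ would (after subtracting the weight-$(k+2)$ term) force $E_{\ell+1}g$ to be realizable in weight $k+2$, i.e.\ $w_\ell(E_{\ell+1}g)\le k+2$; I would rule this out using the standard fact that $\widetilde{E_{\ell+1}}$ ($\equiv\widetilde{E_2}$) is nonvanishing at every supersingular point --- a reflection of $A$ having simple zeros --- so that $\mathrm{ord}_P(E_{\ell+1}g)=\mathrm{ord}_P(g)$ for all supersingular $P$, whence $\nu(E_{\ell+1}g)=\nu(g)=0$ and $w_\ell(E_{\ell+1}g)=k+\ell+1$. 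This contradiction yields $w_\ell(\theta f)=k+\ell+1$, completing (i).

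I expect the main obstacle to be verifying that this supersingular/structure-theoretic package really does transfer from $\SL_2(\ZZ)$ to $\Gamma_1(N)$ under the hypotheses $\ell\nmid N$ and $N\ge4$: namely, that the filtration is computed by the minimal supersingular vanishing order, that the Hasse invariant $\widetilde{E_{\ell-1}}$ has simple zeros exactly at the supersingular points, and that $\widetilde{E_{\ell+1}}$ avoids them. These are the facts that do the real work in the delicate ``no drop'' case of (i) (and, via the power computation, in (iii)), and they are precisely the points where one cannot argue purely formally from Lemma \ref{R} and the Eisenstein congruences but must invoke the geometry of the mod-$\ell$ reduction of the modular curve.
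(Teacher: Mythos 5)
Your proposal is correct and takes essentially the same approach as the paper: the paper proves this lemma simply by citing Section 4 of \cite{Gross}, whose content is exactly the Hasse-invariant/supersingular-point structure theory you develop (filtration computed by the minimal vanishing order at supersingular points, simple zeros of $\widetilde{E_{\ell-1}}$, nonvanishing of $\widetilde{E_{\ell+1}}$ at those points). Your derivations of (iii) and of the forward implication of (i) are precisely the ``quick consequences'' of those cited results that the paper alludes to without writing out.
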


The proofs of (ii) and the reverse implication of (i) are given directly in Section 4 of \cite{Gross}. The remaining facts are quick consequences of the results given in the same section.

The following elementary fact will be useful in Section \ref{pf of main}: if $f \in M_{k}(\Gamma_{1}(N)) \cap \ZZ\llbracket q \rrbracket$ and $\ell$ is a prime, then $(f \mid U_{\ell})^{\ell} \equiv f - \theta^{\ell - 1}f \pmod{\ell}$. It follows that
        $$f \mid U_{\ell} \equiv 0 \pmod{\ell} \iff \theta^{\ell - 1}f \equiv f \pmod{\ell}.$$

\section{Keys to the Proof of Theorem \ref{mainthm}} \label{keys to theorem}
The statement of Theorem \ref{mainthm} concerns the existence of Ramanujan congruences for a particular type of eta-quotient. We hope to apply the following proposition originally due to I.~Kiming and J.~Olsson \cite{KimingandOlsson}  and then corrected by J.~Sinick \cite[Proposition 3.2]{Sinick}.

\begin{prop}\label{3options}
Let $\ell \geq 5$ be prime and $N \geq 4$, $\ell \nmid N$. Suppose that $f(z) \in M_{k}(\Gamma_{1}(N))$ has $\ell$-integral Fourier coefficients, $w_{\ell}(f(z)) \not\equiv 0 \pmod{\ell}$, and $\theta(f(z)) \not\equiv 0 \pmod{\ell}$. Suppose further that $w_{\ell}(\theta^{m}f(z)) \geq w_{\ell}(f(z))$. Then if the Fourier coefficients $d(n)$ of $f(z)$ satisfy $d(\ell n + b) \equiv 0 \pmod{\ell}$, one of the following is true: $b = 0$, $w_{\ell}(f(z)) \equiv (\ell + 1)/2 \pmod{\ell}$, or $w_{\ell}(f(z)) \equiv (\ell + 3)/2 \pmod{\ell}.$
\end{prop}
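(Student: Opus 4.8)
The plan is to translate the Ramanujan congruence into a statement about the $\theta$-operator and then read off the constraint on the filtration $w_{\ell}(f)$ from the structure of its theta cycle. The first step is purely formal. Since $\theta$ acts on $\sum d(n)q^{n}$ by multiplying the $n$-th coefficient by $n$, the operator $P_{b}(\theta) := \prod_{j=0,\, j\neq b}^{\ell-1}(\theta-j)$ multiplies the coefficient of $q^{n}$ by $\prod_{j\neq b}(n-j)\pmod{\ell}$. For $n\not\equiv b\pmod{\ell}$ this product contains a factor $\equiv 0$, while for $n\equiv b\pmod{\ell}$ it equals $\prod_{j\neq b}(b-j)\equiv(\ell-1)!\equiv -1\pmod{\ell}$ by Wilson's theorem. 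Hence $P_{b}(\theta)f\equiv -\sum_{n\equiv b\,(\ell)}d(n)q^{n}\pmod{\ell}$, so the congruence $d(\ell n+b)\equiv 0\pmod{\ell}$ for all $n$ is exactly equivalent to $P_{b}(\theta)f\equiv 0\pmod{\ell}$. If $b\equiv 0\pmod{\ell}$ we are already in the first listed case, so from now on I assume $1\le b\le\ell-1$.

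Next I would extract the filtration information by following the theta cycle of $f$. By Lemma \ref{R} each $\theta^{i}f$ is again a modular form mod $\ell$ for $\Gamma_{1}(N)$, and for any such $g$ the filtration either increases by exactly $\ell+1$ (when $w_{\ell}(g)\not\equiv 0\pmod{\ell}$, by Lemma \ref{2.2}(i)) or, at the steps where $w_{\ell}(g)\equiv 0\pmod{\ell}$, increases by a strictly smaller amount that is still $\equiv 2\pmod{\ell-1}$ (by Lemma \ref{R} together with Lemma \ref{2.2}(ii)). Because $\theta^{\ell}\equiv\theta\pmod{\ell}$, the sequence $w_{\ell}(\theta f),w_{\ell}(\theta^{2}f),\ldots$ is periodic with period $\ell-1$, so the filtrations trace out a finite cycle in which generic steps add $\ell+1$, the net change around the cycle is $0$, and the deficits are forced to total $(\ell+1)$ units of $(\ell-1)$. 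The minimality hypothesis $w_{\ell}(\theta^{m}f)\ge w_{\ell}(f)$ pins $f$ at a lowest point of this cycle, which fixes where the increasing runs begin and lets me compute $w_{\ell}(\theta^{i}f)$ in terms of $w:=w_{\ell}(f)$ and the residue $w\bmod\ell$.

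The heart of the argument is then to exploit the vanishing $P_{b}(\theta)f\equiv 0$. Writing $f=\sum_{i=0}^{\ell-1}f_{i}$ with $f_{i}:=\sum_{n\equiv i}d(n)q^{n}$, so that $\theta f_{i}\equiv i f_{i}$ and $f_{i}\equiv -P_{i}(\theta)f$, the congruence says precisely that the single component $f_{b}$ with $b\neq 0$ vanishes mod $\ell$. I would track the filtration through the $\ell-1$ factors of $P_{b}(\theta)$: it climbs by $\ell+1$ at each factor until the running filtration becomes $\equiv 0\pmod{\ell}$, and a total collapse to $0$ can occur only when the deficit steps of the cycle are positioned so as to annihilate the $b$-eigencomponent. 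Matching the location of these deficit steps (dictated by $w\bmod\ell$ and the minimality of $f$) against the index $b$ forces $w$ to one of the two central residues, giving $w_{\ell}(f)\equiv(\ell+1)/2$ or $(\ell+3)/2\pmod{\ell}$.

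The main obstacle is this last combinatorial step: precisely locating the filtration deficits around the theta cycle and showing that the vanishing of a single off-zero residue class $f_{b}$ is compatible only with those two residues of $w$. This requires simultaneous control of both the integer filtrations $w_{\ell}(\theta^{i}f)$ and their residues mod $\ell$, keeping the arithmetic $\pmod{\ell}$ (governing where deficits occur) rigorously separate from the arithmetic $\pmod{\ell-1}$ (governing the size of each step), and using $\theta^{\ell}\equiv\theta$ together with the minimality hypothesis to exclude degenerate cycles. Pinning down the exact constants $(\ell+1)/2$ and $(\ell+3)/2$—rather than an interval around the center—is exactly the point at which Kiming and Olsson's original count required the correction supplied in \cite{Sinick}, so this is where I would be most careful.
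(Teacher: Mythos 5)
The first thing to note is that the paper does not prove this proposition at all: it is imported verbatim from Kiming--Olsson \cite{KimingandOlsson} as corrected by Sinick \cite[Proposition 3.2]{Sinick}, so your attempt has to be measured against that argument. Your opening step is correct and is indeed the standard one: by Wilson's theorem, $d(\ell n+b)\equiv 0\pmod{\ell}$ for all $n$ is equivalent to $\prod_{j\neq b}(\theta-j)f\equiv 0\pmod{\ell}$, i.e.\ to the vanishing of the eigencomponent $f_b$. Your background facts on the theta cycle (steps of $+(\ell+1)$ away from drops, changes $\equiv 2 \pmod{\ell-1}$, total deficit $\ell+1$ over a period of length $\ell-1$, the low-point role of the hypothesis $w_\ell(\theta^m f)\geq w_\ell(f)$) are also accurate; they are exactly what this paper records as Proposition \ref{cases}. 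But your proposal stops precisely where the proof has to start: the last two paragraphs describe the goal rather than an argument, and you say so yourself.

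Here is the idea you are missing, concretely. Modulo $\ell$ one has $\prod_{j=0}^{\ell-1}(x-j)\equiv x^{\ell}-x$, hence
\begin{equation*}
\prod_{j\neq b}(x-j)\;\equiv\; x\cdot\frac{x^{\ell-1}-1}{x-b}\;\equiv\;\sum_{m=1}^{\ell-1} b^{\ell-1-m}x^{m}\pmod{\ell},
\end{equation*}
so the annihilation of $f_b$ reads $\sum_{m=1}^{\ell-1} b^{-m}\,\theta^{m}f\equiv 0\pmod{\ell}$ with \emph{every} coefficient a unit. A vanishing linear combination of nonzero forms cannot have a unique term of strictly maximal filtration, so the maximum of $w_\ell(\theta^m f)$ over $1\le m\le \ell-1$ must be attained at least twice. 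By the cycle structure, that maximum can only occur at the drop positions; this rules out cases (I)--(II) of Proposition \ref{cases} (a unique maximum), and in cases (III)/(IV) forces $w_\ell(\theta^{i_1}f)=w_\ell(\theta^{i_2}f)$, which after a short computation gives $2k_0+2=\ell+1$ (case III) resp.\ $2k_0+3=\ell$ (case IV), i.e.\ $w_\ell(f)\equiv(\ell+1)/2$ resp.\ $(\ell+3)/2\pmod{\ell}$. Note that this also shows your proposed mechanism --- ``matching the location of the deficit steps against the index $b$'' --- is misdirected: the conclusion is independent of which nonzero $b$ occurs, precisely because the coefficients $b^{-m}$ are nonzero for every such $b$, so $b$ plays no role beyond being nonzero. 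Without the geometric-series identity and the two-fold-maximum argument, your write-up is a correct setup with an acknowledged hole exactly where the content of the proposition lives.
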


There is a glaring problem with naively applying this proposition to the eta-quotient in Theorem $\ref{mainthm}$: the fact that the eta-quotient is not necessarily an integer weight holomorphic modular form. We will fix this by defining an integer weight modular form for each prime $\ell$ for which a Ramanujan congruence exists mod $\ell$ if and only if a Ramanujan congruence exists mod $\ell$ for the given eta-quotient. This definition is given below. 

Let $N := \text{lcm}(\lambda_{1}, \ldots, \lambda_{r}, \mu_{1}, \ldots, \mu_{s})$. Define
		$$F_{\ell}(z) := \Delta(z)^{\ell^{t}}\left( \frac{\Delta(\mu_{1}z)\Delta(\mu_{2}z)\cdots \Delta(\mu_{s}z)}{\Delta(\lambda_{1}z)\Delta(\lambda_{2}z)\cdots \Delta(\lambda_{r}z)} \right)^{\delta_{\ell}} =: \sum_{n \geq 0} D(n)q^{n},$$
where $\delta_{\ell} := \frac{\ell^{2} - 1}{24}$, and where $t \geq 2$ is the smallest integer such that $F_{\ell}$ is holomorphic at the cusps of $\Gamma_{1}(N)$.

\begin{rem}
Note that $F_{\ell}$ is a modular form of weight $\frac{(\ell^{2} - 1)(s-r)}{2} + 12\ell^{t}$ for $\Gamma_{1}(N)$. In what follows, we sometimes need $N \geq 4$. We can substitute $4N$ for $N$ without loss of generality when $N < 4$. 
\end{rem} 
The fact that the Ramanujan congruences for $F_{\ell}$ are in correspondence with those of $f$ is the following lemma.

\begin{lem}\label{cong-reduce}
With notation as above, we have that $D(\ell n + b) \equiv 0 \pmod{\ell}$ if and only if $c(\ell n + a) \equiv 0 \pmod{\ell}$, where $b$ is defined by $24a \equiv 24b + (u - v) \pmod{\ell}$. 
\end{lem}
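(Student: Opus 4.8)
The plan is to reduce the statement to a purely formal identity of $q$-series over $\FF_{\ell}$ and then read off coefficients in the arithmetic progression mod $\ell$. First I would rewrite $F_{\ell}$ directly in terms of $f$. Since $\Delta(mz) = q^{m}\prod_{n\geq 1}(1-q^{mn})^{24}$ and $\lambda,\mu$ are partitions of $u$ and $v$, the leading $q$-powers multiply to $q^{v}$ in the numerator and $q^{u}$ in the denominator, giving
$$\frac{\Delta(\mu_{1}z)\cdots\Delta(\mu_{s}z)}{\Delta(\lambda_{1}z)\cdots\Delta(\lambda_{r}z)} = q^{v-u}f(z)^{-24}.$$
Raising to the power $\delta_{\ell}$ and using $24\delta_{\ell} = \ell^{2}-1$, this yields the exact identity $F_{\ell} = q^{(v-u)\delta_{\ell}}\,\Delta^{\ell^{t}}f^{\,1-\ell^{2}}$ in $\ZZ\llbracket q\rrbracket$, where $f^{\,1-\ell^{2}} = f\cdot f^{-\ell^{2}}$ makes sense because $f = 1 + \cdots$ is a unit in $\ZZ\llbracket q\rrbracket$.

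Next I would reduce modulo $\ell$ via Frobenius. For any $g \in \ZZ\llbracket q\rrbracket$ one has $g^{\ell}\equiv g(q^{\ell})\pmod{\ell}$ (the series obtained by $q\mapsto q^{\ell}$), so $\Delta^{\ell^{t}}\equiv\Delta(\ell^{t}z)$ and $f^{-\ell^{2}}\equiv (f^{-1})(q^{\ell^{2}})\pmod{\ell}$, both of which are supported on multiples of $\ell$, i.e. lie in $\FF_{\ell}\llbracket q^{\ell}\rrbracket$. Therefore
$$F_{\ell}\equiv q^{(v-u)\delta_{\ell}}\,f\cdot G\pmod{\ell},\qquad G := \Delta(\ell^{t}z)\,(f^{-1})(q^{\ell^{2}}).$$
The crucial structural point is that $G = q^{\ell^{t}}\cdot U(q)$ with $U \in \FF_{\ell}\llbracket q^{\ell}\rrbracket^{\times}$, since $\Delta(\ell^{t}z) = q^{\ell^{t}}\prod_{n\geq 1}(1-q^{\ell^{t}n})^{24}$ and $(f^{-1})(q^{\ell^{2}})$ each have radial constant term $1$. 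Writing $G(q) = \hat{G}(q^{\ell})$ with $\hat{G}(y) = y^{\ell^{t-1}}V(y)$, the factor $V$ is a unit in $\FF_{\ell}\llbracket y\rrbracket$.

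Finally I would compare coefficients of $q^{\ell n + b}$. Because $G$ is a series in $q^{\ell}$, every $c(\cdot)$ contributing to $D(\ell n + b)$ has argument congruent to $b - (v-u)\delta_{\ell}\pmod{\ell}$, and a short computation using $24\delta_{\ell} = \ell^{2}-1\equiv -1\pmod{\ell}$ matches this residue class with the value $a$ determined by $24a\equiv 24b+(u-v)\pmod{\ell}$. Setting $\delta_{n} := D(\ell n + b)$ and $\gamma_{m} := c(\ell m + a)$ (extended by $0$ for $m<0$), the convolution coming from $F_{\ell}\equiv q^{(v-u)\delta_{\ell}}fG$ reads, at the level of generating functions, $\sum_{n}\delta_{n}y^{n} = y^{c_{0}}V(y)\sum_{m}\gamma_{m}y^{m}$ for an explicit monomial shift $y^{c_{0}}$. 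The main obstacle is upgrading one implication to a genuine equivalence: the direction ``$c(\ell m + a)\equiv 0$ for all $m$ $\Rightarrow$ $D(\ell n + b)\equiv 0$ for all $n$'' needs only that $G$ is a series in $q^{\ell}$, whereas the converse uses that the leading coefficient of $G$ is a unit mod $\ell$ (equivalently, that $V$ is invertible). This is exactly where it matters that $\Delta$ and $f$ have leading and constant coefficient $1$. Since multiplication by a unit times a monomial is invertible on Laurent series over $\FF_{\ell}$, we conclude $\delta_{n}\equiv 0$ for all $n$ if and only if $\gamma_{m}\equiv 0$ for all $m$, which is the asserted equivalence. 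I would also remark that no modularity or holomorphicity of $F_{\ell}$ is used here; the lemma is a formal $q$-series identity, so the hypotheses on $t$ and the cusps play no role.
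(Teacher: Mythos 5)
Your route is essentially the paper's own: your identity $F_{\ell}=q^{(v-u)\delta_{\ell}}\Delta^{\ell^{t}}f^{1-\ell^{2}}$ is a rearrangement of the paper's equation \eqref{main}, your Frobenius reduction (the cofactor of $f$ is a series in $q^{\ell}$ with leading term $q^{\ell^{t}}$ and unit leading coefficient) is the same step the paper performs before applying $U_{\ell}$, and your direct ``unit times monomial is invertible'' argument simply replaces the paper's appeal to Proposition (3) of Ono. Your observations that only the direction $D\Rightarrow c$ needs the invertibility, and that the whole lemma is a formal $q$-series statement in which holomorphy at the cusps plays no role, are both correct.

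The problem is the one computation you declined to write out, which is where all the arithmetic content of the lemma lives, and it does not come out the way you assert. You correctly note that $D(\ell n+b)$ receives contributions only from $c(i)$ with $i\equiv b-(v-u)\delta_{\ell}\pmod{\ell}$. But multiplying by $24$ and using $24\delta_{\ell}=\ell^{2}-1\equiv-1\pmod{\ell}$ gives
\begin{equation*}
24i\;\equiv\;24b-(v-u)(\ell^{2}-1)\;\equiv\;24b+(v-u)\;\equiv\;24b-(u-v)\pmod{\ell},
\end{equation*}
which is the lemma's relation with the sign of $(u-v)$ reversed; the two agree only when $u\equiv v\pmod{\ell}$. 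So your claim that this class ``matches the value $a$ determined by $24a\equiv 24b+(u-v)$'' is false, and what your argument actually proves is the lemma with $(u-v)$ replaced by $(v-u)$. A concrete check (in which the bound $\ell>5$ plays no role): formally taking $r=0$ and $\mu_{1}=v=1$, so that $f=\sum p(n)q^{n}$ and $F_{\ell}=\Delta^{\ell^{t}+\delta_{\ell}}$, your decomposition pairs the class $b$ of $D$ with the class $a\equiv b-\delta_{\ell}$ of $p$; for $\ell=5$, $\delta_{5}=1$, this correctly pairs $p(5n+4)\equiv 0$ with $D(5n)\equiv 0$, whereas the stated relation would pair $b=0$ with $a=1$, and $p(5n+1)\not\equiv 0\pmod{5}$. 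You should know that the paper itself carries the same sign slip: multiplying \eqref{main} by $q^{-a}$ and applying $U_{\ell}$ produces $\sum_{n}D(\ell n+a-\delta_{\ell}(u-v))q^{n}$ on the left-hand side, not $\sum_{n}D(\ell n+\delta_{\ell}(u-v)+a)q^{n}$ as displayed, so the correct statement of Lemma \ref{cong-reduce} is $24a\equiv 24b+(v-u)\pmod{\ell}$. The flip is harmless downstream (the proof of Theorem \ref{mainthm} uses only the case $u\equiv v\pmod{\ell}$, where $a\equiv b$ under either convention, plus the bijectivity of $a\mapsto b$ on residues), but as written your proof asserts an identification that your own formulas contradict: carry out the computation and correct the sign, or the proof does not establish the statement as given.
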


The proof of this will be given later in Section \ref{calculations}. Notice that if $u \equiv v \mod{\ell}$, then $a \equiv b \pmod{\ell}$ (recall $\ell > 5$). Now that we have defined an appropriate modular form, we need to proceed by checking that $F_\ell$ satisfies the other assumptions of Proposition \ref{3options}. The fact that $\theta F_\ell \not\equiv 0 \pmod{\ell}$ is a simple calculation (see Proposition \ref{non-vanishing}) which will be done in Section \ref{calculations}. On a technical note, the necessity of $\gamma$ in the statement of Theorem $\ref{mainthm}$ comes from this calculation. The most difficult assumptions to verify in Proposition $\ref{3options}$ are precisely those which deal with the filtrations. This will be accomplished in Section \ref{calcFiltrations}. By these calculations, we will see that the two congruences $w_\ell(F_\ell(z)) \equiv (\ell+1)/2 \pmod{\ell}$ and $w_\ell(F_\ell(z)) \equiv (\ell + 3)/2 \pmod{\ell}$ are impossible. Lastly, in order to get the full strength of Theorem \ref{mainthm}, we must dispose of the possibility that $b = 0$ in the statement of Proposition \ref{3options}. This is a technical point which is resolved in Section \ref{pf of main}. 

\section{Calculating the Filtrations} \label{calcFiltrations}
In this section, we show that $w_{\ell}(\theta^{m} F_{\ell}) \geq w_{\ell}(F_{\ell}) = \frac{(\ell^{2} - 1)(s-r)}{2} + 12\ell^{t}$ by proving the more general Proposition \ref{filtration}.

\begin{prop}\label{filtration}
Let $F \in M_k(\Gamma_1(N)) \cap \mathbb{Z}[[q]]$, $\ell \geq 5$ prime, $\ell \nmid N$, $\theta F \not\equiv 0 \pmod{\ell}$, and suppose that $F$ does not vanish on $\HH$. Then $w_\ell(F) = k$ and $w_\ell(\theta^m F) \geq w_\ell(F)$.
\end{prop}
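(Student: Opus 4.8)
The plan is to reduce both assertions to the behaviour of the reduction $\widetilde F \in \FF_\ell\llbracket q \rrbracket$ at the supersingular points of the modular curve, extracting the key input from the fact that nonvanishing on $\HH$ forces the divisor of $F$ to be supported entirely on the cusps. First I would note that $\widetilde F \not\equiv 0$ (otherwise $\theta F \equiv 0$), and recall the standard description of the filtration via the Hasse invariant $A := \widetilde{E_{\ell-1}}$: since $E_{\ell-1}\equiv 1\pmod\ell$, the section $A$ of $\omega^{\otimes(\ell-1)}$ on $X_1(N)_{/\overline{\FF}_\ell}$ vanishes to order exactly one at each supersingular point and is nonzero elsewhere, and for $g\in M_{k'}(\Gamma_1(N))\cap\ZZ\llbracket q\rrbracket$ one has $w_\ell(g) = k' - (\ell-1)\,\ord_A(\widetilde g)$, where $\ord_A(\widetilde g) := \min_P \ord_P(\widetilde g)$ ranges over supersingular $P$ (this follows from the results cited from \cite{Gross}, using that $A$ has trivial $q$-expansion). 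In particular $w_\ell(g)<k'$ iff $\widetilde g$ vanishes at every supersingular point.

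To prove $w_\ell(F)=k$ it thus suffices to exhibit nonvanishing of $\widetilde F$ at the supersingular locus. Enlarging $N$ to $4N$ if necessary (permissible by the Remark, and harmless since $\ell\nmid N$), I regard $F$ as a section of $\omega^{\otimes k}$ over the smooth proper $\ZZ_{(\ell)}$-model of $X_1(N)$, which has good reduction because $\ell\nmid N$. Over $\CC$ the hypothesis that $F$ does not vanish on $\HH$ says exactly that $\mathrm{div}(F)$ is supported on the cusps, so the orders of $F$ at the cusps sum to $\deg\omega^{\otimes k}$. Reduction modulo $\ell$ can only \emph{raise} the order of vanishing at each individual cusp (the leading Fourier coefficient may acquire a factor of $\ell$), while the total degree $\deg\omega^{\otimes k}$ is preserved by flatness. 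Since every local order of $\widetilde F$ is nonnegative and the cusps already account for the full degree, $\widetilde F$ can have no zeros away from the cusps; in particular $\ord_A(\widetilde F)=0$, giving $w_\ell(F)=k$. Note this yields the \emph{strong} conclusion that $\widetilde F$ is nonzero at every supersingular point, not merely at one.

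For the inequality I would use the same formula: iterating Lemma \ref{R} realizes $\theta^m F$ as the reduction of a form of weight $k+m(\ell+1)$, whence
$$w_\ell(\theta^m F) \;=\; k + m(\ell+1) - (\ell-1)\,\ord_A\!\big(\widetilde{\theta^m F}\big),$$
so that $w_\ell(\theta^m F)\ge w_\ell(F)=k$ is equivalent to the bound $(\ell-1)\,\ord_A(\widetilde{\theta^m F})\le m(\ell+1)$. To track $\ord_A$ under $\theta$ I would localize at a supersingular point $P$ via the identity behind Lemma \ref{R}, namely $\widetilde{\theta g}=A\,\widetilde{\partial g}+\tfrac{\mathrm{wt}(g)}{12}\,\widetilde{E_{\ell+1}}\,\widetilde g$ for the Serre derivative $\partial g=\theta g-\tfrac{\mathrm{wt}(g)}{12}E_2 g\in M_{\mathrm{wt}(g)+2}(\Gamma_1(N))$. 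Two inputs make this usable: by the strong form of Part 1, $\widetilde F$ is nonzero at every supersingular point, and $\widetilde{E_{\ell+1}}$ is likewise nonzero there (applying the identity to $E_{\ell-1}$ gives $\widetilde{E_{\ell+1}}=12\,\widetilde{\partial E_{\ell-1}}$, the Serre derivative of $A$, which cannot vanish at a simple zero of $A$). Consequently, at $P$ the order of $\widetilde{\theta^j F}$ is preserved as long as the running weight $k+j(\ell+1)\equiv k+j\pmod\ell$ is nonzero, and can increase only at indices $j\equiv -k\pmod\ell$, where the second term drops out and $\widetilde{\theta^{j+1}F}=A\,\widetilde{\partial(\theta^{j}F)}$.

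The main obstacle is precisely to bound these jumps and show their cumulative size keeps $(\ell-1)\,\ord_A(\widetilde{\theta^m F})\le m(\ell+1)$ for every $m$, i.e.\ that the minimum of $w_\ell(\theta^m F)$ over the theta cycle equals $k$ and is attained at $m=0$. This is the genuine content of the theta-cycle analysis governed by Lemma \ref{2.2}(i): each drop lowers the naive weight by a positive multiple of $\ell-1$, and the total drop around the cycle is rigidified by the periodicity $\theta^{\ell}\equiv\theta\pmod\ell$ (so $w_\ell(\theta^{m+\ell-1} F)=w_\ell(\theta^m F)$ for $m\ge1$). The delicate point—especially when $\ell\mid k$, where the very first step already drops and minimality forces the drop to be exactly $\ell-1$—is that the strong nonvanishing of $\widetilde F$ on the \emph{entire} supersingular locus compels every drop to be minimal, so the cycle never falls below $k$. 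I expect this local control of the drops to be where essentially all the work lies.
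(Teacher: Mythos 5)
Your first assertion, $w_\ell(F)=k$, is handled correctly, and by a genuinely different route from the paper: you argue geometrically that the divisor of $F$ is concentrated at the cusps, that cusp orders can only rise under reduction while the total degree is preserved, so $\widetilde F$ has no supersingular zeros and hence is not divisible by the Hasse invariant. The paper instead forms the norm $G=\prod_i F\mid M_i$ down to level one, identifies it as a constant multiple of $\Delta^e$ by the valence formula (this is where nonvanishing on $\HH$ enters), and pins down $w_v(G)=12e$ by Sturm's theorem. Both arguments rest on the same two pillars---integrality of the expansions at all cusps and the concentration of the divisor at the cusps---so for this half the difference is mainly one of language, and yours is fine modulo the standard geometric machinery you cite.

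The genuine gap is in the second half, which is the substantive part of the proposition. You correctly reduce the inequality to the bound $(\ell-1)\,\ord_A(\widetilde{\theta^m F}) \le m(\ell+1)$, but you do not prove this bound: you propose a drop-by-drop analysis at supersingular points and then state that controlling the cumulative size of the jumps is ``where essentially all the work lies.'' That deferred step \emph{is} the proposition, and the mechanism you suggest for it does not hold up locally. Nonvanishing of $\widetilde F$ at every supersingular point controls the orders only up to the first drop index; at a drop one has $\widetilde{\theta^{j+1}F}=A\,\widetilde{\partial(\theta^{j}F)}$, and there is no local reason why the Serre derivative $\widetilde{\partial(\theta^{j}F)}$ cannot itself vanish at all supersingular points (the Serre derivative of a nonvanishing section can vanish), which would make the drop larger than minimal. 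Ruling this out needs a global input that your outline never supplies. The paper supplies exactly such an input, but at the cusps rather than at the supersingular points: Lemma \ref{tilde ord bound} (the mod-$v$ order of vanishing at each cusp cannot decrease under $\theta$), combined with the norm $H=\prod_i(\theta^m F)\mid M_i$ and Sturm's theorem, yields $w_v(H)\ge 12e$ and hence $w_\ell(\theta^m F)\ge k$, with no theta-cycle analysis at all. If you wish to stay in your supersingular framework, the clean fix is a degree count rather than a local one: by Lemma \ref{tilde ord bound} the cusps of $\widetilde{\theta^m F}$ already carry zeros of total degree at least that of $\widetilde F$, namely $k\mu/12$ (with $\mu$ the relevant index), while the full divisor of $\widetilde{\theta^m F}$ has degree $(k+m(\ell+1))\mu/12$; since the supersingular locus consists of $(\ell-1)\mu/12$ points, the minimum supersingular order is at most the average, $m(\ell+1)/(\ell-1)$, which is exactly the bound you need. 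As written, however---with no control of cusp orders and no proof of the jump bounds---the inequality $w_\ell(\theta^m F)\ge w_\ell(F)$ is asserted, not proven.
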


Following the discussion below Lemma 4.1 of \cite{Sinick}, we enumerate the cosets of $\Gamma_{1}(N)$ in $\SL_{2}(\ZZ)$ by $\{i\}_{1 \leq i \leq 2d_{N}}$. Let $M_{i}$ be a representative of the $i$-th coset. Let $\alpha_{i}$ be the cusp that $M_{i}$ sends to $\infty$. Denote the minimal period of $F \mid M_{i}$ by $t_{i}$. Then $F \mid M_{i}$ has a Fourier expansion in powers of $q_{t_{i}} := e^{2\pi i z/t_{i}}$, and the order of vanishing of $F$ at $\alpha_{i}$ is the index of the first non-vanishing Fourier coefficient of $F$ in powers of $q_{t_{i}}$, denoted $\ord_{\alpha_{i}}(F)$. Though these $q_{t_{i}}$-Fourier expansions of $F$ need not have coefficients in $\ZZ$, Corollary 5.3 of \cite{Radu} tells us that they lie in $\ZZ[\zeta_{N}]$ with $\zeta_{N}$ a primitive $N$-th root of unity. 

Instead of considering modular forms modulo $\ell$, one may choose an algebraic number field $L$ and look at forms $g \in M_{k}(\Gamma_{1}(N)) \cap L\llbracket q \rrbracket$. We can then reduce $g$ modulo $v$ for any prime $v \in \mc{O}_{L}$ such that the $v$-adic valuation of $g$ is $0$. This allows us to define the notion of ``modular form modulo $v$", and we can define the filtration $w_{v}$ for nonvanishing forms $\pmod{v}$ in the obvious way. Hence we can define the $v$-adic valuation of the corresponding power series to be the minimum of the $v$-adic valuations of the coefficients. Defining $\widetilde{\ord}_{\alpha_{i}}(f)$ to be the order of vanishing of $f \pmod{v}$ at the cusp $\alpha_{i}$ (this is well-defined; see for example Remark 2.4 of \cite{Dewar}), we have Lemma 4.2 of \cite{Sinick}, stated below.

\begin{lem}\label{tilde ord bound}
Let $m \geq 1$ be an integer and let $v$ be a prime in $\ZZ[\zeta_{N}]$ such that $v \nmid 2, 3, N$. Let $f(z)$ be a modular form for $\Gamma_{1}(N)$ such that $f(z) \mid M_{i}$ has coefficients in $\QQ(\zeta_{N})$ and $v$-adic valuation $0$. Let $\alpha_{i}$ be a cusp of $\Gamma_{1}(N)$. Then
        $$\widetilde{\ord}_{\alpha_{i}}(\theta^{m}f) \geq \widetilde{\ord}_{\alpha_{i}}(f).$$
\end{lem}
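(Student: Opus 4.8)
The plan is to show that reducing $\theta$ modulo $v$ cannot decrease the order of vanishing at a cusp, by transporting the question to the cusp $\infty$ via the slash operator and exploiting the fact that $\theta$ acts on a $q_{t_i}$-expansion by multiplying the coefficient of $q_{t_i}^n$ by the $v$-integral scalar $n/t_i$. The crucial ingredient is that Lemma \ref{R} produces, for a genuine modular form $f$ of weight $k$, a genuine holomorphic modular form
\[
R = \left(\theta f - \tfrac{k}{12}E_2 f\right)E_{\ell-1} + \tfrac{k}{12}E_{\ell+1}f \in M_{k+\ell+1}(\Gamma_1(N))
\]
with $R \equiv \theta f \pmod{\ell}$, hence $R \equiv \theta f \pmod{v}$ for any prime $v \mid \ell$. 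Since $R$ is an honest modular form, $\widetilde{\ord}_{\alpha_i}(\theta f) = \widetilde{\ord}_{\alpha_i}(R)$, so I may study the latter through the honest $q_{t_i}$-expansion of $R \mid M_i$. Throughout, the hypotheses $v \nmid 2,3,N$ are what make $12$ and the cusp width $t_i$ into $v$-adic units, so that every reduction below is well-defined.

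The first key step is to compute $R \mid M_i$ and observe that the $R$-operator commutes with slashing. Writing $g_i := f \mid M_i$ and $M_i = \begin{psmallmatrix} a & b \\ c & d \end{psmallmatrix}$, I would first record the standard commutation identity
\[
(\theta f)\mid_{k+2} M_i = \theta(g_i) + \tfrac{k}{12}\bigl(E_2 \mid M_i - E_2\bigr)g_i,
\]
which follows by differentiating $g_i = (cz+d)^{-k} f(M_i z)$ and rewriting the resulting $\frac{c}{2\pi i (cz+d)}$ term via the transformation law \eqref{E2}. Since $E_{\ell-1}$ and $E_{\ell+1}$ are modular forms for all of $\SL_2(\ZZ)$ they are fixed by $\mid M_i$, and slashing is multiplicative; combining this with $(E_2 f)\mid_{k+2} M_i = (E_2 \mid M_i)g_i$, the two occurrences of $E_2 \mid M_i$ cancel and I obtain
\[
R \mid M_i = \left(\theta g_i - \tfrac{k}{12}E_2 g_i\right)E_{\ell-1} + \tfrac{k}{12}E_{\ell+1}g_i,
\]
which is precisely the $R$-formula applied to $g_i$ in place of $f$. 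Reducing modulo $v$ and invoking $E_{\ell-1} \equiv 1$ and $E_{\ell+1} \equiv E_2 \pmod{v}$ (these are congruences of integer $q$-expansions, hence valid modulo any $v \mid \ell$), every $E_2$ term collapses and I arrive at the clean conclusion $R \mid M_i \equiv \theta g_i \pmod{v}$.

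The second key step is to read off the effect of $\theta$ on the $q_{t_i}$-expansion. Because $q_{t_i} = e^{2\pi i z/t_i}$ we have $\theta = \tfrac{1}{t_i} q_{t_i} \tfrac{d}{dq_{t_i}}$, so if $g_i = \sum_n a_n q_{t_i}^n$ with $a_n \in \QQ(\zeta_N)$ then $\theta g_i = \sum_n \tfrac{n}{t_i} a_n q_{t_i}^n$. The cusp width $t_i$ divides $N$ and $v \nmid N$, so $n/t_i$ is a $v$-adic integer for every $n$; consequently $a_n \equiv 0 \pmod{v}$ forces $\tfrac{n}{t_i}a_n \equiv 0 \pmod{v}$. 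Thus no coefficient below the index of the reduction of $g_i$ modulo $v$ can become nonzero, which gives $\widetilde{\ord}_{\alpha_i}(\theta f) \geq \widetilde{\ord}_{\alpha_i}(f)$. For general $m$ I would iterate: $\theta^m f$ is again congruent modulo $v$ to a genuine modular form for $\Gamma_1(N)$ obtained by repeating the $R$-construction, and the same slash computation yields $(\theta^m f)\mid M_i \equiv \theta^m g_i \pmod{v}$, whose $q_{t_i}$-coefficients are $(n/t_i)^m a_n$ — still $v$-integral multiples of $a_n$. (If $\theta^m f \equiv 0 \pmod{v}$, the bound holds vacuously.)

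The step I expect to be the main obstacle is the slash computation in the second paragraph: carefully justifying that the $R$-operator intertwines with $\mid M_i$, i.e.\ that the non-holomorphic $E_2$-anomalies introduced by $\theta(g_i)$ and by $(E_2 f)\mid M_i$ cancel exactly, and that the resulting congruence $R \mid M_i \equiv \theta g_i \pmod{v}$ is legitimate even though the $q_{t_i}$-expansions live in $\ZZ[\zeta_N]$ rather than $\ZZ$. Once this equivariance is established, the order-of-vanishing bound reduces to the elementary observation that $\theta$ rescales Fourier coefficients by $v$-integral factors.
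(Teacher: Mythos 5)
Your proposal is correct and follows essentially the same route as the paper's source for this lemma (the paper cites Sinick's Lemma 4.2 rather than reproving it): the key identity $(\theta f)\mid M_i \equiv \theta(f \mid M_i) \pmod{v}$, obtained by cancelling the $E_2$-anomalies via the $R$-construction of Lemma \ref{R}, is exactly the fact the paper itself quotes from Sinick's proof in Section \ref{exampleofProp2}, and the conclusion then follows from $\theta$ scaling $q_{t_i}$-coefficients by the $v$-integral factors $n/t_i$. Your parenthetical handling of the case $\theta^m f \equiv 0 \pmod{v}$ also correctly sidesteps the gap in Sinick's original argument that the paper's Remark after the lemma points out.
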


\begin{rem}
Sinick's proof states, ``Since $v \nmid N$ and $f(z) \mid M_i$ has $v$-adic valuation 0, the Fourier expansion of $\theta(f\mid M_i)$ has $v$-adic valuation 0." However, this statement is false by taking a prime above $5$ in $\mathbb{Z}[\zeta_7]$ and considering $E_4(z)$ as a modular form for $\Gamma_1(7)$ for example. One needs the additional assumption that $\theta f \not\equiv 0 \pmod{v}$.
\end{rem}
\begin{proof}[Proof of Proposition \ref{filtration}]
We let $M_{i}$, $\alpha_{i}$, $t_{i}$, and $v \in \ZZ[\zeta_{N}]$ be as above, $v$ being a prime above $\ell$. Since $\theta F \not\equiv 0 \pmod{\ell}$ by assumption, $F \not\equiv 0 \pmod{\ell}$. In particular, $F \not\equiv 0 \pmod{v}$, and so Theorem 12.3.4 and Remark 12.3.5 of \cite{Diamond-Im} assert that $F \mid M_{i} \not\equiv 0 \pmod{v}$. Define
        $$G(z) := \prod_{i=1}^{2d_{N}} (F \mid M_{i}),$$
a modular form for $\SL_{2}(\ZZ)$ of weight $2d_{N}k$. As $F$ is zero-free on $\HH$, so is $G$, and so by the valence formula we have that $G$ is a non-zero constant multiple of $\Delta(z)^{e}$ with $e := \frac{2d_{N}k}{12} = \frac{d_{N}k}{6}$. It follows that $w_v(G)=12e$ since if there existed a modular form of smaller weight which is congruent to $G,$ Sturm's Theorem \cite[Theorem 2.58]{Web} would imply that it would have to vanish mod $v.$ Thus $w_v(F)=k,$ and since $F \in \ZZ\llbracket q \rrbracket$, we in fact have that $w_{\ell}(F) = k$, as desired. 

Now we show that $w_{\ell}(\theta^{m}F) \geq w_{\ell}(F)$. Notice that $\widetilde{\ord}_{\infty}(G) = e$, whence $\widetilde{\ord}_{\infty}(F \mid M_{i}) = \frac{\widetilde{\ord}_{\alpha_{i}}(F)}{t_{i}}$. Thus
        $$\sum_{i=1}^{2d_{N}} \frac{\widetilde{\ord}_{\alpha_{i}}(F)}{t_{i}} = \widetilde{\ord}_{\infty}(G) = e.$$
Define 
        $$H := \prod_{i=1}^{2d_{N}} (\theta^{m} F) \mid M_{i}.$$
Notice that $H$ is a modular form modulo $v$ for $\SL_{2}(\ZZ)$ by the modified version of Lemma \ref{R}, where we replace $\ell$ with $v$. Then
        $$\widetilde{\ord}_{\infty}(H) = \sum_{i=1}^{2d_{N}} \frac{\widetilde{\ord}_{\alpha_{i}}(\theta^{m}F)}{t_{i}} \geq \sum_{i=1}^{2d_{N}} \frac{\widetilde{\ord}_{\alpha_{i}}(F)}{t_{i}} = e,$$
the inequality being a consequence of Lemma \ref{tilde ord bound}. Since $\theta^{m}F$ is a modular form modulo $v$ which does not vanish, Theorem 12.3.4 and Remark 12.3.5 of \cite{Diamond-Im} assert that $(\theta^{m}F) \mid M_{i} \not\equiv 0 \pmod{v}$. This shows that $H \not\equiv 0 \pmod{v}$. Sturm's Theorem \cite[Theorem 2.58]{Web} now tells us that $w_{v}(H) \geq 12e$, and so $w_{v}(\theta^{m}F) = w_{\ell}(\theta^{m}F) \geq k = w_{\ell}(F)$, the first equality coming from the fact that $F$ has integral Fourier coefficients. 
\end{proof}

\section{Necessary Calculations} \label{calculations}
In this section, we produce two calculations that are necessary for the application of Proposition $\ref{3options}$.

\begin{prop} \label{non-vanishing}
We have that $\theta F_{\ell} \not\equiv 0 \pmod{\ell}$. 
\end{prop}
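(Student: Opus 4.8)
The plan is to show that $\theta F_\ell \not\equiv 0 \pmod \ell$ by exhibiting a single Fourier coefficient $D(n)$ of $F_\ell$ with $n \not\equiv 0 \pmod \ell$ and $D(n) \not\equiv 0 \pmod \ell$; since $\theta$ multiplies the $q^n$ coefficient by $n$, the presence of such an $n$ forces $\theta F_\ell$ to be nonzero mod $\ell$. The most natural candidate is the \emph{first} nonzero coefficient. Recall
        $$F_{\ell}(z) = \Delta(z)^{\ell^{t}}\left( \frac{\Delta(\mu_{1}z)\cdots \Delta(\mu_{s}z)}{\Delta(\lambda_{1}z)\cdots \Delta(\lambda_{r}z)} \right)^{\delta_{\ell}},$$
and since each $\Delta(mz) = q^m\prod_{n\geq 1}(1-q^{mn})^{24}$ begins with $q^m$, the leading exponent of $F_\ell$ is
        $$n_0 := \ell^t + \delta_\ell\left(\sum_j \mu_j - \sum_i \lambda_i\right) = \ell^t + \delta_\ell(v - u),$$
with leading coefficient $\pm 1$, hence a unit mod $\ell$. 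So the only thing to check is that $n_0 \not\equiv 0 \pmod\ell$.

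First I would compute $n_0 \bmod \ell$ using $\delta_\ell = (\ell^2-1)/24 \equiv -1/24 \pmod\ell$ (valid as $\ell \geq 5$) and $\ell^t \equiv 0 \pmod \ell$ (since $t\geq 2 \geq 1$), giving $n_0 \equiv -(v-u)/24 \equiv (u-v)/24 \pmod\ell$. Thus the argument succeeds immediately \emph{unless} $u \equiv v \pmod\ell$, i.e. exactly the second branch of the hypothesis in Theorem \ref{mainthm}. In that degenerate case the leading coefficient lies in an $\ell$-divisible degree and gives no information, so I would instead locate a \emph{later} coefficient. This is where $\gamma$ enters, as the remark following Lemma \ref{cong-reduce} anticipates: I would examine the coefficient one step up from the bottom. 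Writing $\Delta(mz) = q^m - 24q^{2m} + \cdots$, the next term after the leading $q^{n_0}$ comes from perturbing the smallest available part. The smallest element of $\{\lambda_i,\mu_j\}$, occurring $\gamma$ times, controls the first nontrivial correction to the leading monomial, and the resulting coefficient of $q^{n_0 + (\text{smallest part})}$ will be a nonzero integer multiple of $\gamma$ (up to units), nonzero mod $\ell$ precisely because $(\ell,\gamma N)=1$. The exponent $n_0 + m_{\min}$ differs from $n_0$ by the smallest part $m_{\min}\mid N$, so $n_0 + m_{\min} \equiv m_{\min} \not\equiv 0 \pmod \ell$ as $(\ell, N)=1$, guaranteeing this coefficient survives $\theta$.

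I expect the main obstacle to be the bookkeeping in the $u\equiv v$ case: one must verify that the coefficient of $q^{n_0+m_{\min}}$ is genuinely a nonzero multiple of $\gamma$ and not accidentally cancelled by contributions from other factors $\Delta(\mu_j z)$ or $\Delta(\lambda_i z)$ whose expansions also begin contributing at that degree. The cleanest way to control this is to factor out the leading monomial and expand $\prod_m (1-q^{mn})^{\pm 24\,(\cdot)}$ to first order, tracking only the linear term in $q$; the coefficient of the lowest positive power of $q$ in this expansion is $\mp 24\,\delta_\ell\,\gamma$ (from the $\gamma$ smallest parts) or, in the pure-$\Delta^{\ell^t}$ contribution, a multiple of $\ell^t$, so only the $\gamma$-term survives mod $\ell$. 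Since $\ell \nmid 24$, $\ell\nmid\gamma$, and $\ell\nmid\delta_\ell$ would need checking—but $\delta_\ell \equiv -1/24 \not\equiv 0$—this linear coefficient is a unit times $\gamma$, hence nonzero mod $\ell$, completing the proof.
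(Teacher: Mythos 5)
Your proposal is correct and takes essentially the same approach as the paper: expand $F_\ell$ modulo $\ell$, use the leading coefficient at exponent $\ell^t + (v-u)\delta_\ell$ when $u \not\equiv v \pmod{\ell}$, and otherwise use the coefficient at offset $m_{\min}$, which is a unit times $\gamma$ and sits at an exponent $\equiv m_{\min} \not\equiv 0 \pmod{\ell}$ because $(\ell, \gamma N) = 1$. If anything, your bookkeeping of why the $\Delta(z)^{\ell^t}$ factor and the larger parts cannot contaminate that coefficient is slightly more careful than the paper's own expansion.
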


\begin{proof}
We note that by definition of $\theta$ it is sufficient to compute the Fourier expansion modulo $\ell$. We have
\begin{align*}
    F_{\ell}(z) &= \Delta(z)^{\ell^{t}}\left( \frac{\Delta(\mu_{1}z)\Delta(\mu_{2}z)\cdots \Delta(\mu_{s}z)}{\Delta(\lambda_{1}z)\Delta(\lambda_{2}z)\cdots \Delta(\lambda_{r}z)} \right)^{\delta_{\ell}} \\
    &= q^{\ell^t + (v-u)\delta_\ell} \prod_{n=1}^\infty \left[(1-q^n)^{\ell^t} \left(\prod_{i} (1-q^{\mu_i n})^{\ell^2-1}\right) \left(\prod_{j} \frac{1}{(1-q^{\lambda_j n})^{\ell^2-1}}\right)\right].
\end{align*}
Using a geometric series expansion, we have
\begin{align*}
    F_\ell(z) &= q^{\ell^t + (v-u)\delta_\ell} \prod_{n=1}^\infty \left[ (1-q^n)^{\ell^t} \left(\prod_{i} (1-q^{\mu_i n})^{\ell^2-1}\right) \left(\prod_{j} \left(\sum_{k=0}^\infty q^{\lambda_j n k}\right)^{\ell^2-1}\right)\right] \\
    &=q^{\ell^t + (v-u)\delta_\ell} \prod_{n=1}^\infty \left[(1-q^n)^{\ell^t} \left(\prod_{i} (1-q^{\mu_i n})^{\ell^2-1}\right) \left(\prod_{j} (1+q^{\lambda_j n} + \ldots)^{\ell^2-1}\right) \right]\\
    &= q^{\ell^t + (v-u)\delta_\ell} \prod_{n=1}^\infty\left[ (1-q^n)^{\ell^t} \left(\prod_{i} (1-(\ell^2-1)q^{\mu_i n} + \ldots ) \right) \left(\prod_{j} (1+(\ell^2-1)q^{\lambda_j n} + \ldots)\right)\right] \\
    &\equiv q^{\ell^t + (v-u)\delta_\ell} \prod_{n=1}^\infty \left[(1-q^n)^{\ell^t} \left(\prod_{i} (1+q^{\mu_i n} + \ldots )\right) \left(\prod_{j} (1-q^{\lambda_j n} + \ldots)\right) \right]\pmod{\ell} \\
    &\equiv q^{\ell^t + (v-u)\delta_\ell} \prod_{n=1}^\infty \left[(1-q^{\ell^t n})(1+ \alpha q^{\mu_r n} + \ldots)(1-\beta q^{\lambda_s n} + \ldots)\right] \pmod{\ell},
\end{align*}
where $\alpha$ is the number of occurrences of $\mu_r$ in $\mu$ and $\beta$ is the number of occurrences of $\lambda_s$ in $\lambda$. Hence,
\begin{align*}
    F_\ell(z) &\equiv q^{\ell^t + (v-u)\delta_\ell} (1\pm \cdots \pm \gamma q^{m} \pm \cdots)  \\
    &\equiv q^{\ell^t + (v-u)\delta_\ell} \pm \cdots \pm \gamma q^{m + \ell^t + (v-u)\delta_\ell} \pm \cdots \pmod{\ell},
\end{align*}
where $m = \min(\mu_r,\lambda_s)$. So, by applying the theta operator, 
\begin{align*}
    \theta F_\ell(z) &\equiv (\ell^t + (v-u)\delta_\ell)q^{\ell^t + (v-u)\delta_\ell} \pm \cdots \pm \gamma (m + \ell^t + (v-u)\delta_\ell)q^{m + \ell^t + (v-u)\delta_\ell} \pm \cdots \pmod{\ell}.
\end{align*}
One of these coefficients does not vanish mod $\ell$ because either $(v-u)\delta_\ell$ is divisible by $\ell$ or not, and from the fact that $(\gamma m,\ell) =1$ the result follows. 
\end{proof}

We also provide the proof of the  correspondence of Ramanujan congruences of the eta-quotient to an integer weight modular form.

\begin{proof}[Proof of Lemma \ref{cong-reduce}]
We have
		\begin{align*}
		\sum_{n \geq 0} c(n)q^{n} &= \prod_{n\geq 1} \frac{(1 - q^{\lambda_{1}n})\cdots (1 - q^{\lambda_{r}n})}{(1 - q^{\mu_{1}n})\cdots(1 - q^{\mu_{s}n})} \\
		&= \prod_{n\geq 1} \left( \frac{(1 - q^{\mu_{1}n})\cdots (1 - q^{\mu_{s}n})}{(1 - q^{\lambda_{1}n})\cdots(1 - q^{\lambda_{r}n})} \right)^{-\ell^{2}} \left( \frac{(1 - q^{\mu_{1}n})\cdots (1 - q^{\mu_{s}n})}{(1 - q^{\lambda_{1}n})\cdots(1 - q^{\lambda_{r}n})} \right)^{\ell^{2} - 1} \\
		&= \prod_{n\geq1} \left[ \left( \frac{(1 - q^{\mu_{1}n})\cdots (1 - q^{\mu_{s}n})}{(1 - q^{\lambda_{1}n})\cdots(1 - q^{\lambda_{r}n})} \right)^{-\ell^{2}} \frac{q^{\delta_{\ell}(\lambda_{1} + \cdots + \lambda_{r})}}{q^{\delta_{\ell}(\mu_{1} + \cdots + \mu_{s})}} \left( \frac{q^{\frac{1}{24}(\mu_{1}+ \cdots +\mu_{s})}(1-q^{\mu_{1}n})\cdots (1-q^{\mu_{s}n})}{q^{\frac{1}{24}(\lambda_{1}+\cdots +\lambda_{r})}(1-q^{\lambda_{1}n})\cdots (1-q^{\lambda_{r}n})} \right)^{\ell^{2} - 1} \right] \\
		&= \left( \prod_{n \geq 1} \frac{(1 - q^{\mu_{1}n})\cdots(1 - q^{\mu_{s}n})}{(1 - q^{\lambda_{1}n})\cdots(1 - q^{\lambda_{r}n})} \right)^{-\ell^{2}} q^{\delta_{\ell}(u-v)} \left(\frac{\Delta(\mu_{1}z)\cdots \Delta(\mu_{s}z)}{\Delta(\lambda_{1}z)\cdots \Delta(\lambda_{r}z)}\right)^{\delta_{\ell}}. 
		\end{align*}
Hence, 
		\begin{align*}
		q^{\delta_{\ell}(u - v)} \left( \frac{\Delta(\mu_{1}z)\cdots \Delta(\mu_{s}z)}{\Delta(\lambda_{1}z)\cdots \Delta(\lambda_{r}z)}\right)^{\delta_{\ell}} = \left( \prod_{n\geq 1} \frac{(1 - q^{\mu_{1}n})\cdots(1 - q^{\mu_{s}n})}{(1 - q^{\lambda_{1}n})\cdots (1 - q^{\lambda_{r}n})} \right)^{\ell^{2}}\sum_{n\geq 0} c(n)q^{n}.
		\end{align*}
Multiply both sides by $\Delta(z)^{\ell^{t}}$ to get
		\begin{align}\label{main}
		q^{\delta_{\ell}(u - v)} F_{\ell}(z) = \Delta(z)^{\ell^{t}}\left( \prod_{n\geq 1} \frac{(1 - q^{\mu_{1}n})\cdots(1 - q^{\mu_{s}n})}{(1 - q^{\lambda_{1}n})\cdots (1 - q^{\lambda_{r}n})} \right)^{\ell^{2}}\sum_{n\geq 0} c(n)q^{n}.
		\end{align}
Now we apply $U_{\ell}$ to equation \eqref{main}, reduce modulo $\ell$, and multiply both sides by $q^{-a}$ to get
		\begin{align*}
		\sum_{n \geq 0} D\left(\ell n + \delta_{\ell}( u - v)+ a\right)q^{n} &\equiv q^{-a}\Delta(z)^{\ell^{t-1}}\left( \prod_{n\geq 1} \frac{(1 - q^{\mu_{1}n})\cdots(1 - q^{\mu_{s}n})}{(1 - q^{\lambda_{1}n})\cdots (1 - q^{\lambda_{r}n})} \right)^{\ell}\sum_{n\geq 0} c(\ell n)q^{n} \pmod{\ell}.
		\end{align*}
Applying Proposition (3) of \cite{Ono} to the right-hand side, we get 
		$$\sum_{n\geq 0} D\left(\ell n + \delta_{\ell}( u - v)+ a\right)q^{n} \equiv 0 \pmod{\ell} \iff \sum_{n\geq 0} c(\ell(n - \ell^{t-1}) + a)q^{n} \equiv 0 \pmod{\ell}.$$
Since $c(n) = 0$ for $n < 0$ this proves the lemma.
\end{proof}



\section{Proof of Theorem \ref{mainthm}}\label{pf of main}
By using the results in Sections \ref{calcFiltrations} and \ref{calculations}, we may apply Proposition \ref{3options} to $F_\ell$. Notice that the two congruences $w_{\ell}(F_{\ell}(z)) \equiv (\ell + 1)/2 \pmod{\ell}$ and $w_{\ell}(F_{\ell}(z)) \equiv (\ell + 3)/2 \pmod{\ell}$ are impossible given the lower bound on $\ell$ and also the fact that $r-s \neq 1, 3$. This shows that if $F_\ell$ has a Ramanujan congruence, then $b = 0$. Thus, checking that $f$ has no Ramanujan congruences modulo $\ell$ is equivalent to checking that $\sum_{n \geq 0} D(\ell n)q^{n} \not\equiv 0 \pmod{\ell}$. In the case that $u \equiv v \pmod{\ell}$, we have that $F_\ell=q^{\ell M}(1+\cdots)$ for some $M$ and therefore $\sum_{n\geq 0} D(\ell n)q^{n} \not\equiv 0 \pmod{\ell}$.

The case $u \neq v$ and $s-r \in 2 \NN_{0}$ is more subtle, but luckily the proof of Theorem 1.2 in \cite{Sinick} works just as well in our situation. We start by stating Proposition 5.1 of \cite{Sinick}.

\begin{prop}[Proposition 5.1 of \cite{Sinick}]\label{cases}
Let $\ell \geq 5$ be prime and $N \geq 4$, $\ell \nmid N$. Suppose that $f(z) \in M_{k}(\Gamma_{1}(N))$ has $\ell$-integral Fourier coefficients, $w_{\ell}(f(z)) \not\equiv 0 \pmod{\ell}$, and $\theta f \not\equiv 0 \pmod{\ell}$. Suppose further that $w_{\ell}(\theta^{m} f(z)) \geq w_{\ell}(f(z))$. Let $i_{1} < i_{2} < \cdots < i_{c}$ be those $i \in \{0, 1, \ldots, \ell - 1\}$ for which $w_{\ell}(\theta^{i} f) \equiv 0 \pmod{\ell}$. Write $w_{\ell}(\theta^{i_{j} + 1} f) = w_{\ell}(\theta^{i_{j}} f) + (\ell + 1) - s_{j}(\ell - 1)$. Write $k = w_{\ell}(f)$ and let $k_{0} \in \{1, \ldots, \ell - 1\}$ be such that $k \equiv -k_{0} \pmod{\ell}$. Then one of the four cases below holds:
\begin{enumerate}[(I)]

\item $k \equiv 1 \pmod{\ell}$, $c = 1$, $i_{1} = \ell - 1$, and $s_{1} = \ell + 1$.

\item $k \equiv 2 \pmod{\ell}$, $c = 1$, $i_{1} = \ell - 2$, and $s_{1} = \ell + 1$. 

\item $k \not\equiv 1 \pmod{\ell}$, $c = 2$, $(i_{1}, i_{2}) = (k_{0}, \ell - 1)$, and $(s_{1}, s_{2}) = (k_{0} + 1, \ell - k_{0})$. 

\item $k \not\equiv 1 \pmod{\ell}$, $c = 2$, $(i_{1}, i_{2}) = (k_{0}, \ell - 2)$, and $(s_{1}, s_{2}) = (k_{0} + 2, \ell - k_{0} - 1)$. 

\end{enumerate}
We have $w_{\ell}(f) = w_{\ell}(\theta^{\ell - 1} f)$ if and only if case (II) or case (IV) holds.
\end{prop}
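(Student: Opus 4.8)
The plan is to reconstruct the entire ``shape'' of the integer sequence $k_i := w_\ell(\theta^i f)$, $i \ge 0$, from three structural facts and then read off the four cases by elementary bookkeeping. First I record the governing facts. Lemma \ref{R} with Lemma \ref{2.2}(i) gives $k_{i+1} \le k_i + \ell + 1$, with equality iff $\ell \nmid k_i$. Comparing, via Lemma \ref{2.2}(ii), the weight $k_{i+1}$ with the weight $k_i + \ell + 1$ of the form $R$ from Lemma \ref{R} (both reducing to $\theta^{i+1} f \not\equiv 0 \pmod \ell$) shows $k_{i+1} - k_i \equiv \ell + 1 \equiv 2 \pmod{\ell-1}$ at every step, so I may write $k_{i+1} = k_i + (\ell+1) - s_i(\ell-1)$ with $s_i$ a nonnegative integer and $s_i \ge 1$ exactly at the drop indices $i_1 < \cdots < i_c$ (those with $\ell \mid k_i$; note $i = 0$ is excluded since $w_\ell(f) \not\equiv 0$). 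Finally, $\theta^\ell \equiv \theta \pmod \ell$ by Fermat, so the sequence is periodic with period $\ell - 1$ for $i \ge 1$; in particular $k_\ell = k_1$, and since $\theta f \not\equiv 0$ one checks $\theta^i f \not\equiv 0$ for all $i \ge 1$, so each $k_i$ is defined.

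From these I extract two consequences. Summing the steps over one full period $i = 1, \dots, \ell - 1$ gives net change $k_\ell - k_1 = 0$, hence the conservation law $\sum_{j=1}^c s_{i_j} = \ell + 1$. Tracking residues mod $\ell$ (each non-drop step adds $\ell + 1 \equiv 1$), the first drop occurs exactly when $k_0 + i \equiv 0 \pmod \ell$, i.e. at $i_1 = k_0$ (the residue named in the statement); and since the residue just after a drop is $k_{i_j+1} \equiv 1 + s_{i_j} \pmod \ell$, the gap to the next drop obeys the spacing recursion $i_{j+1} = i_j + \ell - s_{i_j}$.

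The crux is to pin $s_1 := s_{i_1}$ to at most two values using the global-minimum hypothesis $k_i \ge k_0 =: K$. If $s_1 \le k_0$, the spacing recursion gives $i_2 \ge \ell$, so $c = 1$ and the conservation law forces $s_1 = \ell + 1 > k_0$, a contradiction; hence $s_1 \ge k_0 + 1$. On the other hand $k_{i_1} = K + k_0(\ell+1)$, so $k_{i_1+1} \ge K$ yields $s_1 \le (k_0+1)(\ell+1)/(\ell-1) = (k_0+1) + 2(k_0+1)/(\ell-1)$, giving $s_1 \le k_0 + 2$ whenever $k_0 \le \ell - 3$. For these generic residues $s_1 \in \{k_0+1, k_0+2\}$, and the spacing recursion with the conservation law then forces $(i_2, s_2) = (\ell-1, \ell-k_0)$ or $(\ell-2, \ell-k_0-1)$, namely cases (III) and (IV); the same recursion applied once more shows no third drop fits in $\{0, \dots, \ell-1\}$, so $c \le 2$. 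The two boundary residues are handled separately: $k_0 = \ell - 1$ forces $i_1 = \ell - 1$ to be the last index, hence $c = 1$ and $s_1 = \ell + 1$ (case (I)), while $k_0 = \ell - 2$ leaves only $\ell-1$ available and yields either $c = 1$ with $s_1 = \ell + 1$ (case (II)) or $c = 2$ with $(s_1, s_2) = (\ell-1, 2)$ (case (III)). This exhausts the four cases.

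For the final equivalence I compute $k_{\ell-1}$ directly: from $k_\ell = k_1 = K + \ell + 1$ and $k_\ell = k_{\ell-1} + (\ell+1) - s_{\ell-1}(\ell-1)$ I get $k_{\ell-1} = K + s_{\ell-1}(\ell-1)$, so $w_\ell(\theta^{\ell-1}f) = w_\ell(f)$ iff $s_{\ell-1} = 0$, i.e. iff $\ell - 1$ is not a drop index, i.e. iff the last drop is at $\ell - 2$ --- precisely cases (II) and (IV) (equivalently, via the $U_\ell$ identity at the end of Section \ref{forms mod l}, iff $f \mid U_\ell \equiv 0 \pmod \ell$). I expect the main obstacle to be the sandwiching of $s_1$: establishing simultaneously the lower bound $s_1 \ge k_0 + 1$ (no single drop can absorb all of $\ell + 1$ without pushing some $k_i$ below $K$) and the upper bound $s_1 \le k_0 + 2$, together with the careful separate bookkeeping for the boundary residues $k_0 \in \{\ell-2, \ell-1\}$, where indices past $\ell - 1$ disappear and change the count. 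The delicacy throughout is the interplay between the mod-$\ell$ residues, which locate the drops, and the integer filtration values, which the minimum hypothesis constrains.
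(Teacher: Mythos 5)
This proposition is not proved in the paper at all: it is imported verbatim as Proposition 5.1 of \cite{Sinick}, so there is no internal proof to compare yours against; the meaningful comparison is with Sinick's original argument, which your write-up essentially reconstructs. Writing $k_i := w_\ell(\theta^i f)$, your ingredients --- the step identity $k_{i+1} = k_i + (\ell+1) - s_i(\ell-1)$ with $s_i \geq 1$ exactly when $\ell \mid k_i$ (Lemma \ref{R} together with Lemma \ref{2.2}(i),(ii)), the periodicity $k_\ell = k_1$ coming from $\theta^\ell \equiv \theta \pmod{\ell}$, the resulting conservation law $\sum_j s_{i_j} = \ell+1$, the residue count locating the first drop at $i_1 = k_0$, and the use of the minimum hypothesis $k_i \geq k_0$ to sandwich $s_1 \in \{k_0+1, k_0+2\}$ --- are exactly the theta-cycle mechanics (going back to Jochnowitz and Kiming--Olsson) on which Sinick's proof runs, so this is the same route, independently reassembled, rather than a genuinely different one. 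Your argument is correct, including the final equivalence via $k_{\ell-1} = k_0 + s_{\ell-1}(\ell-1)$. Two small points should be written out in a final version. First, the spacing recursion $i_{j+1} = i_j + \ell - s_{i_j}$ is only literally valid for $1 \leq s_{i_j} \leq \ell-1$; when $s_{i_j} = \ell$ or $\ell+1$ the next drop sits at $i_j + \ell$ or $i_j + \ell - 1$ respectively, and this refinement is what eliminates $s_1 = \ell$ in the boundary case $k_0 = \ell-2$, where your upper bound only gives $s_1 \leq \ell+1$. Second, excluding a third drop in the subcase $s_1 = k_0+2$ is not a consequence of the recursion alone: a third drop at $i_3 \leq \ell-1$ would force $s_2 \geq \ell-1$, which is ruled out by the conservation law, since then $s_1 + s_2 + s_3 \geq (k_0+2) + (\ell-1) + 1 > \ell+1$. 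Both repairs are immediate and your outline already gestures at them, so I regard the proof as complete.
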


We proceed by contradiction by assuming $D(\ell n ) \equiv 0 \pmod{\ell}$ for all $n$ which implies that $\theta^{\ell - 1}F_{\ell} \equiv F_{\ell} \pmod{\ell}$. So by the last statement in Proposition \ref{cases}, we are in cases (II) or (IV). But in case (II) we have
        $$w_{\ell}(F_{\ell}) = \frac{(s-r)(\ell^{2} - 1)}{2} + 12\ell^{t} \equiv 2 \pmod{\ell} \iff r-s \equiv 4 \pmod{\ell},$$
which contradicts that $\ell > s-r + 4$. So we are in case (IV). Here, we have
        $$k_{0} \equiv -k \equiv \frac{s-r}{2} \pmod{\ell}.$$
Using Lemma \ref{2.2}, the identity $w_{\ell}(\theta^{i_{j} + 1} f) = w_{\ell}(\theta^{i_{j}} f) + (\ell + 1) - s_{j}(\ell - 1)$ with $s_{j} = s_{1} = k_{0} + 2$ becomes
        \begin{align}
        w_{\ell}(\theta^{k_{0}+1}F_{\ell}) &= w_{\ell}(F_{\ell}) + (\ell+1)(k_{0} + 1) - (k_{0} + 2)(\ell - 1) \nonumber\\
        &= w_{\ell}(F_{\ell}) + 2k_{0} + 3 - \ell. \label{filt}
        \end{align}
Now, $2k_{0} \equiv s-r \pmod{\ell}$. Since $s-r$ is even and $\ell > s-r \in \mathbb{N}_{0}$, we must have that
        $$k_{0} = \frac{s-r}{2}.$$
Thus, \eqref{filt} becomes
        $$w_{\ell}(F_{\ell}) + s-r + 3 - \ell.$$
But $\ell > s-r + 3$, so this implies that
        $$w_{\ell}(\theta^{k_{0} + 1}F_{\ell}) = w_{\ell}(F_{\ell}) + s-r + 3 - \ell < w_{\ell}(F_{\ell}),$$
contradicting Proposition \ref{filtration}. This concludes the proof of Theorem \ref{mainthm}. 

\section{Proof of Theorem \ref{thm mod 5}}\label{mod5}

In this section we use modular forms of half-integral weight. See \cite{Web} for background. We follow in the same theme as Ono \cite{Ono}. We first define the eta-quotient
\begin{equation*}
    f(z) \coloneqq  \frac{\eta(3z)}{\eta(2z)\eta(z)} \ \eta^{12}(80z) = \sum_{m \geq 0} b(m)q^{m}.
\end{equation*}
Using the appropriate theorems for eta-quotients (see for example \cite[Theorem 1.64]{Web}), one can see that $f \in M^!_{\frac{11}{2}}(\Gamma_1(1440))$. To make this holomorphic at the cusps, we introduce the factor $\eta^5(z)/\eta(5z)$. We make the following definition: \begin{equation*}
    F(z) \coloneqq f(z) \ \frac{\eta^5(z)}{\eta(5z)} = \sum_{m=0}^\infty c(m)q^m \in S_\frac{15}{2}(\Gamma_1(1440)).
\end{equation*}
Notice $\eta^5(z)/\eta(5z) \equiv 1 \pmod{5}$. So, $F(z) \equiv f(z) \pmod{5}$. Also, we have $\eta^{12}(80z)/q^{40} = 1 + \sum_{m > 0} a(80m)q^{80m}$. Thus, if we can show that
\begin{equation}
    b(80n) \equiv 0 \pmod{5} \text{ for all } n \geq 1,
\end{equation}
then $(\ref{congruence 1})$ holds. Since $80 \mid 1440$, we have 
\begin{equation*}
    F(z) \mid U_{80} = \sum_{n = 0}^\infty c(80n)q^n \in S_\frac{15}{2}(\Gamma_1(1440)). 
\end{equation*}
Recall that $b(80n) \equiv c(80n) \pmod{5}$. By Sturm's Criterion, we need to verify that $c(80n) \equiv 0 \pmod{5}$ for $0 \leq n \leq \frac{15}{24}[\SL_2(\mathbb{Z}) : \Gamma_0(1440)] + 1 = 2161$. We do this by using SAGE and computing the series $\sum_{m \geq 0} b(m)q^m$ up to $80\cdot2161 = 172,880$ terms and then focusing on the coefficients whose index is divisible by 80. This proves the first congruence. 

\bigskip

\noindent
The second congruence is proved in a nearly identical fashion. First, we define the eta-quotient
\begin{equation*}
    g(z) \coloneqq \frac{\eta(3z)}{\eta(2z)\eta(z)} \eta^6(80z) = \sum_{m \geq 0} \beta(m)q^m \in M_{\frac{5}{2}}^!(\Gamma_1(2880)).
\end{equation*}
Define
\begin{equation*}
    G(z) \coloneqq g(z) \frac{\eta^5(z)}{\eta(5z)} \in S_{\frac{9}{2}}(\Gamma_1(2880)).
\end{equation*}
We notice that $G(z) \equiv g(z) \pmod{5}$. Also, $\eta^6(80z)/q^{20} = q + \sum_{m > 0} \alpha(80m)q^{80m}$. Thus, if we can show that 
\begin{equation}
    \beta(80n) \equiv 0 \pmod{5} \text{ for all } n \geq 1,
\end{equation}
then $(\ref{congruence 2})$ holds. Since $80 \mid 2880$, then we have 
\begin{equation*}
    G(z) \mid U_{80} = \sum_{n = 0}^\infty \gamma(80n) q^n \in S_\frac{9}{2}(\Gamma_1(2880)).
\end{equation*}
Hence, $\beta(80n) \equiv \gamma(80n) \pmod{5}$. By Sturm's Criterion, we need to verify that $\gamma(80n) \equiv 0 \pmod{5}$ for $0 \leq n \leq 9/24 [\SL_2(\mathbb{Z} : \Gamma_0(2880)] + 1 = 2593$. We do this by using SAGE and computing the series $\sum_{m \geq 0} \beta(m)q^m$ up to $80\cdot2593 = 207440$ terms and then focus on the coefficients whose index is divisible by 80. This completes the proof.

\section{Example of Proposition 2} \label{exampleofProp2}
In this section, we give an explicit example of Proposition \ref{filtration} by following the steps outlined in the proof. Let $\ell = 7$ and 
\begin{equation*}
    f(z) = \frac{\eta(3z)}{\eta(2z)\eta(z)}.
\end{equation*}
The corresponding $F_7$ as defined in $\eqref{keys to theorem}$ is 
\begin{equation*}
    F_7(z) = \Delta^{49}(z)\left(\frac{\eta(z)\eta(2z)}{\eta(3z)}\right)^{48},
\end{equation*}
which is a holomorphic modular form for $\Gamma_1(6)$. We need to find a set of coset representatives for $\Gamma_1(6)$ in $\SL_2(\mathbb{Z})$. The following lemma will make this easier.
\begin{lem}\label{coset lemma} Let $\gamma, \gamma' \in \SL_2(\mathbb{Z})$.
\begin{equation*}
    \Gamma_1(N)\gamma = \Gamma_1(N)\gamma' \iff |(c,d)| = |(c',d')| = N \text{ and } (c,d) \neq (c',d') \text{ in } (\mathbb{Z}/N\mathbb{Z})^2,
\end{equation*}
where $\gamma = \begin{bmatrix} a & b \\ c & d\end{bmatrix}$ and $\gamma' = \begin{bmatrix} a' & b' \\ c' & d'\end{bmatrix}$.
\end{lem}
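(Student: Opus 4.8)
The plan is to prove that the bottom row of a matrix, read modulo $N$, is a complete invariant of the right cosets $\Gamma_1(N)\backslash\SL_2(\ZZ)$, and then to translate this into the order/distinctness language of the statement. First I would record a preliminary observation: for any $\gamma = \begin{bmatrix} a & b \\ c & d\end{bmatrix} \in \SL_2(\ZZ)$ the relation $ad - bc = 1$ forces $\gcd(c,d) = 1$, hence $\gcd(c,d,N) = 1$, and therefore $(c,d)$ always has order exactly $N$ in $(\ZZ/N\ZZ)^2$. Thus the hypothesis $|(c,d)| = |(c',d')| = N$ is automatic for bottom rows arising from $\SL_2(\ZZ)$, and the genuine content of the lemma is the equivalence between coincidence of cosets and equality of bottom rows modulo $N$, so that distinct cosets correspond precisely to distinct admissible bottom rows.

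For the forward direction I would suppose $\Gamma_1(N)\gamma = \Gamma_1(N)\gamma'$, so that $\gamma' = h\gamma$ for some $h \in \Gamma_1(N)$. Writing $h \equiv \begin{bmatrix} 1 & * \\ 0 & 1 \end{bmatrix} \pmod{N}$ and multiplying out, the bottom row of $h\gamma$ is congruent modulo $N$ to the bottom row of $\gamma$; hence $(c',d') \equiv (c,d) \pmod{N}$. This direction is immediate from the definition of $\Gamma_1(N)$.

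The reverse direction is the main step, though it is again only a short matrix computation. Assuming $(c,d) \equiv (c',d') \pmod{N}$, I would show directly that $\gamma'\gamma^{-1} \in \Gamma_1(N)$. Using $\gamma^{-1} = \begin{bmatrix} d & -b \\ -c & a \end{bmatrix}$, the bottom-left entry of $\gamma'\gamma^{-1}$ is $c'd - d'c \equiv cd - dc = 0 \pmod{N}$ and the bottom-right entry is $-c'b + d'a \equiv da - cb = \det\gamma = 1 \pmod{N}$. Since $\det(\gamma'\gamma^{-1}) = 1$ holds exactly in $\ZZ$, expanding the determinant and reducing modulo $N$ with the bottom row congruent to $(0,1)$ forces the top-left entry to be $\equiv 1 \pmod{N}$ as well. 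Hence $\gamma'\gamma^{-1}$ satisfies all the defining congruences of $\Gamma_1(N)$, so the two cosets coincide.

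Combining the two directions yields $\Gamma_1(N)\gamma = \Gamma_1(N)\gamma' \iff (c,d) \equiv (c',d') \pmod{N}$; contrapositively, distinct cosets are detected exactly by distinct (and automatically order-$N$) bottom rows, which is the form in which the lemma is applied to list representatives for $\Gamma_1(6)$. I do not anticipate a real obstacle: the only points requiring care are noticing that $\gcd(c,d,N)=1$ is forced by the determinant condition and deducing the top-left congruence in the reverse direction from $\det = 1$. I would also note that the statement as printed appears to contain a sign slip on the left-hand side — equality of cosets should be paired with equality, not inequality, of bottom rows — which I would correct in the final write-up by stating the clean equivalence and reading off the distinctness version as its contrapositive.
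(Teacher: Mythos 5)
Your proof is correct, and there is nothing in the paper to compare it against: the paper states this lemma without proof (it is the standard coset criterion for $\Gamma_1(N)$, essentially Diamond--Shurman, with only the lifting issue in the subsequent remark attributed to their Lemma 3.8.4). Your argument is the standard one and is complete: the forward direction from the shape of $\Gamma_1(N)$ modulo $N$, and the reverse direction by computing $\gamma'\gamma^{-1}$, where the key point --- that the congruences $C \equiv 0$, $D \equiv 1 \pmod{N}$ on the bottom row together with $\det = 1$ force the top-left entry $A \equiv 1 \pmod{N}$ --- is handled correctly. Your two side observations are also right and worth keeping: the order condition $|(c,d)| = N$ is automatic for bottom rows of $\SL_2(\ZZ)$ matrices since $ad - bc = 1$ forces $\gcd(c,d) = 1$; and the lemma as printed is indeed misstated (taking $\gamma = \gamma'$ makes the left side true and the right side false), so equality of cosets must be paired with \emph{equality} of bottom rows in $(\ZZ/N\ZZ)^2$, with the distinctness formulation being the contrapositive actually used to enumerate the twelve expansions for $\Gamma_1(6)$. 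The only content your write-up does not supply --- and the lemma does not assert --- is surjectivity, i.e.\ that every order-$N$ pair in $(\ZZ/N\ZZ)^2$ is realized as a bottom row; that is exactly what the paper's remark delegates to Diamond--Shurman, so you may simply cite it the same way.
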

\begin{rem}
One needs to be slightly careful when applying the above lemma. For instance, when $N = 6$, $|(5,5)| = 6$ but since $c$ and $d$ in this case are not relatively prime, then one cannot construct a matrix $\begin{bmatrix} a & b \\ 5 & 5 \end{bmatrix} \in \SL_2(\mathbb{Z})$. However, $(5,5) \equiv (5,-1) \pmod{6}$ and this problem is resolved. The fact that this can always be resolved is Lemma 3.8.4 of \cite{diamond_shurman_2005}.
\end{rem}
The above lemma states that all we must find is the pairs $(c,d)$ in $(\mathbb{Z}/6\mathbb{Z})^2$ with order $6$. Furthermore, by consulting the formulas for computing $q$-expansions of eta-quotients in \cite{Ryan2019} and using the fact that we are raising the eta-quotient to the 48th power, we notice that the $q$-expansions are only dependent on these two matrix entries. There are 24 such elements of $(\mathbb{Z}/6\mathbb{Z})^2$, however, not all of these elements produce unique $q$-expansions. This is because $(c,d)$ and $(-c,-d)$ give rise to the same expansion and are not equivalent because $3 \nmid \gcd(c,d) = 1$. Thus, there are twelve expansions to compute. For example, 
\begin{equation*}
     F_7(z) \mid \begin{bmatrix}
    1 & 0 \\ 1 & 1
    \end{bmatrix} = \Delta^{51}(z) \frac{1}{2^{24}} q \left[\prod_{n=1}^\infty (1-(-q^{1/2})^{n})\right]^{48} 3^{24} \zeta_3 q^{-2/3} \left[\prod_{n=1}^\infty (1-(\zeta_3 q^{1/3})^n)\right]^{-48}.
\end{equation*}
By multiplying all of these expansions together,  we obtain a formula for $G$ as in the proof of  Proposition \ref{filtration},
\begin{equation*}
    G(z) = \frac{3^{432}}{2^{384}} q^4 \frac{\Delta^{1224}(z)\Delta^{16}(2z)}{\Delta^{12}(3z)} \prod_{n=1}^\infty \frac{(1-q^{n/2})^{384}(1-(-q^{1/2})^n)^{384}}{(1-q^{n/3})^{288} (1-(\zeta_3q^{1/3})^n)^{288}(1-(\zeta_3^2 q^{1/3})^n)^{288}},
\end{equation*}
which (as expected) is equal to $C \Delta^{1224}(z)$ where $C = \frac{3^{432}}{2^{384}}$. We will also show that $H$ (as in Proposition \ref{filtration} with $m = 1$) does not vanish mod $7$. To accomplish this, we use the fact $\theta(F_7 \mid M) \equiv (\theta F_7) \mid M \pmod{7}$
which comes from the proof of Lemma 4.2 in \cite{Sinick}. Using SAGE, we obtain
\begin{equation*}
    H \equiv 2q^{1252} + 5q^{1253} + q^{1254} + 2q^{1255} + 2q^{1256} + \cdots \pmod{7}
\end{equation*}
which is clearly not equivalent to 0.

\bibliographystyle{alpha}
\bibliography{bibliography}
\end{document}